\numberwithin{equation}{section}
\theoremstyle{plain}
\newtheorem{theorem}{Th\'eorème}[section]
\newtheorem{cor}[theorem]{Corollaire}
\newtheorem{prop}[theorem]{Proposition}
\newtheorem{lemma}[theorem]{Lemme}
\newtheorem{hyp}{Hypothèse}
\newtheorem*{teo}{Théorème}
\theoremstyle{definition}
\newtheorem{remark}[theorem]{Remarque}
\newtheorem{definition}[theorem]{D\'efinition}
\newtheorem*{rmque}{Remarque}
\newcommand{\nc}{\newcommand}
\newcommand{\Z}{\mathbb{Z}}
\newcommand{\Q}{\mathbb{Q}}
\newcommand{\N}{\mathbb{N}}
\newcommand{\C}{\mathbb{C}}
\newcommand{\M}{\mathcal{M}}
\newcommand{\Max}{\mathfrak{m}}
\newcommand{\F}{\mathbb{F}}
\nc\e{\epsilon}
\nc\LL{\mathcal L}
\nc\bG{\mathbb{G}}
\newcommand{\Aut}{\operatorname{Aut}}
\newcommand{\Autf}{\operatorname{Aut}_f}
\newcommand{\Stab}{\operatorname{Stab}}
\newcommand{\St}{\operatorname{St}}
\newcommand{\si}{\sigma}
\nc{\dd}{\delta}
\nc{\THT}{\Theta}
\nc{\tht}{\theta}
\nc{\sscl}[1]{\langle #1 \rangle}
\nc{\alg}[1]{#1^\textrm{alg}}
\nc{\restr}[1]{\!\!\upharpoonright_{#1}}
\newcommand{\inv}{ ^{-1}}
\newcommand{\tp}{\operatorname{tp}}
\newcommand{\cl}{\operatorname{cl}}
\nc\Frob{\mathrm{Frob}}
\def\Ind#1#2{#1\setbox0=\hbox{$#1x$}\kern\wd0\hbox to 0pt{\hss$#1\mid$\hss}
\lower.9\ht0\hbox to 0pt{\hss$#1\smile$\hss}\kern\wd0}
\def\Notind#1#2{#1\setbox0=\hbox{$#1x$}\kern\wd0\hbox to
0pt{\mathchardef\nn="0236\hss$#1\nn$\kern1.4\wd0\hss}\hbox to
0pt{\hss$#1\mid$\hss}\lower.9\ht0
\hbox to 0pt{\hss$#1\smile$\hss}\kern\wd0}
\def\ind{\mathop{\mathpalette\Ind{}}}
\def\nind{\mathop{\mathpalette\Notind{}}}
\def\indi#1{\mathop{\ \ \hbox to 0pt{\hss$\mid^{\hbox to
0pt{$\scriptstyle#1$\hss}}$\hss}
\lower4pt\hbox to 0pt{\hss$\smile$\hss}\ \ }}
\def\nindi#1{\mathop{\ \ \hbox to 0pt{\hss$\!\not{\mid}^{\hbox to
0pt{$\scriptstyle\,#1$\hss}}$\hss}
\lower4pt\hbox to 0pt{\hss$\smile$\hss}\ \ }}
\def\indild{\mathop{\ \ \hbox to 0pt{\hss$\mid^{\hbox to
0pt{$\scriptstyle\mathrm{ld}$\hss}}$\hss}
\lower4pt\hbox to 0pt{\hss$\smile$\hss}\ \ }}
\begin{document}


\title{Sur les automorphismes bornés de corps munis d'opérateurs}
\date{\today}

\author{Thomas Blossier, Charlotte Hardouin et Amador Martin-Pizarro}
\address{Universit\'e de Lyon; CNRS; Universit\'e Lyon 1; Institut Camille
Jordan UMR5208, 43 boulevard du 11
novembre 1918, F--69622 Villeurbanne Cedex, France 
\newline
\indent
Institut de Mathématiques de Toulouse UMR5219, Université
Paul Sabatier, 118 route de Narbonne, F--31062 Toulouse Cedex 9,
France.
\newline
\indent Universit\'e de Lyon; CNRS; Universit\'e Lyon 1; Institut
Camille Jordan UMR5208, 43 boulevard du 11
novembre 1918, F--69622 Villeurbanne Cedex, France.}
\email{blossier@math.univ-lyon1.fr}
\email{charlotte.hardouin@math.univ-toulouse.fr}
\email{pizarro@math.univ-lyon1.fr}
\thanks{Cette collaboration a débuté lors du programme thématique 
\emph{Model Theory, Arithmetic Geometry and Number Theory} au printemps 
2014 au MSRI, que nous remercions.  Le travail du  second auteur a  reçu le  soutien 
du   projet ANR-11-LABX-0040-CIMI dans le cadre 
du programme ANR-11-IDEX-0002-02 ainsi que du projet ANR-10-JCJC 0105. 
Les premier et troisième auteurs ont 
conduit cette recherche gr\^ace au soutien du projet ValCoMo 
ANR-13-BS01-0006, sans subvention ni du projet ANR-10-LABX-0070, 
ni du projet ANR-11-IDEX-0007.}
\keywords{Model Theory, Automorphism Group, Fields with operators}
\subjclass{03C45, 12H05}

\begin{abstract} 
Nous donnons une preuve alternative, valable en toute caractéristique,
d'un ancien résultat de Lascar caractérisant les automorphismes bornés
d'un corps algébriquement clos. Cette méthode se généralise au cas des 
automorphismes bornés de certains corps munis d'opérateurs.
\end{abstract}

\maketitle

\section*{English Summary} 
We give an alternative proof, valid in all characteristics, of a
result of Lascar characterising the bounded automorphisms of an
algebraically closed field. We generalise this method to various fields 
equipped with operators.  

\section*{Introduction}

Le groupe des automorphismes d'une structure dénombrable,
muni de la topologie de la convergence ponctuelle, est un groupe  
\emph{polonais}, un groupe topologique homéomorphe à un espace métrique 
séparable complet.
Le groupe des automorphismes $\Aut(\M)$ d'une structure  $\M$ contient 
un sous-groupe distingué, le sous-groupe $\Autf(\M)$ des 
automorphismes forts. Un automorphisme est \emph{fort} s'il fixe  les 
classes de   toute 
relation d'équivalence définissable sans 
paramètres avec un nombre fini de classes. 
Par exemple, pour le corps $\C$ des nombres complexes, il s'avère que les
automorphismes forts coïncident avec les automorphismes fixant $\alg \Q$. 
Ainsi, le quotient $\Aut(\C)/\Autf(\C)$ est le groupe de Galois absolu de 
$\Q$. Plus généralement, le quotient $\Aut(\M)/\Autf(\M)$, qui dépend 
uniquement de la théorie $T$ de $\M$, est un groupe profini, nommé le 
\emph{groupe de Galois} de $T$.

Le groupe de Galois d'une théorie est loin d'être compris en général. En 
revanche, pour les théories fortement minimales (qui incluent 
celles des corps algébriquement clos), le sous-groupe $\Autf(\M)$ est 
simple modulo les automorphismes forts bornés \cite{dL92}. Rappelons qu'un 
élément est  \emph{algébrique} sur une sous-partie $A$ s'il satisfait une 
formule à paramètres sur $A$ ayant un nombre fini des réalisations. Ceci 
correspond à la notion usuelle d'éléments algébriques sur $A\subset \C$. 
Pour une théorie fortement minimale, l'opérateur cl\^oture algébrique 
satisfait le principe de l'échange et induit une dimension, qui correspond 
au degré de transcendance pour les corps algébriquement clos. 

Un automorphisme $\tau$ est \emph{borné} s'il existe un  ensemble 
fini $A$ tel que pour tout élément $b$, 
l'image $\tau(b)$ est algébrique sur $A\cup\{b\}$. (Cette définition est 
équivalente à la définition originale de Lascar, voir \cite[Preuve du 
Théorème 15]{dL92}.)
 
 La simplicité du groupe $\Autf(\M)$ modulo les automorphismes forts bornés
 a été généralisée \cite{EGT} à toute structure $\M$ munie d'une dimension 
à valeurs entières avec une relation d'indépendance stationnaire compatible 
avec cette dimension, en remplaçant \emph{algébrique} par \emph{de 
dimension relative} $0$ dans la définition des automorphismes  bornés.

Pour un pur corps algébriquement clos $K$ de
caractéristique $0$,  Lascar montre \cite[Théorème 15]{dL92}
que l'unique automorphisme borné est l'identité. Ainsi $\Autf(K)$
est simple. Il mentionne que Ziegler a décrit les automorphismes bornés 
d'un pur corps algébriquement clos en  toute caractéristique \cite{mZ91}~: 
les seuls automorphismes bornés d'un corps algébriquement clos en 
caractéristique positive sont les puissances entières du Frobenius.

Konnerth montre la trivialité des automorphismes bornés d'un corps 
différentiellement clos de caractéristique $0$, en prenant  pour notion de 
dimension  le  degré de transcendance différentiel \cite[Proposition 
2.9]{rK02}.

D'autres exemples classiques de corps munis d'un opérateur sont les
corps aux différences génériques : modèles
existentiellement clos dans la classe des corps dans le langage des anneaux
enrichi par un symbole dénotant un automorphisme. Ces corps sont
obtenus en prenant des ultraproduits de puissances de Frobenius
sur la clôture algébrique de $\alg{\F_p}$ \cite{Hr04}. Un grand nombre des 
propriétés modèle-théoriques valables dans le cas des purs corps ou des 
corps différentiels en caractéristique $0$ s'étendent à ce contexte. Dans 
\cite{MS14}, Moosa et Scanlon proposent une approche qui englobe les corps 
différentiellement clos et les corps aux différences
génériques en caractéristique $0$~: les \emph{corps munis
d'opérateurs libres}. Cependant, leur méthode ne s'applique ni en 
caractéristique positive, ni lorsque les opérateurs commutent, ce qui 
exclut, en particulier, deux exemples connus en caractéristique nulle: les
corps différentiels avec $n$ dérivations qui commutent, étudiés par 
McGrail \cite{McG00}, et les corps différentiels aux différences, traités 
par Hrushovski ainsi que par Bustamante-Medina \cite{rB11}.

Un élément d'un corps $K$ muni d'opérateurs est \emph{générique} si
ses mots en les opérateurs ne satisfont aucune relation algébrique,
mis à part celles imposées par la théorie. Contrairement aux cas des
corps différentiels ou aux différences, un élément non-générique d'un
corps muni de plusieurs opérateurs n'est pas toujours
fini-dimensionnel~: il peut engendrer une sous-structure de degré de
transcendance infini. Il n'existe à priori pas de notion naturelle de
dimension pour définir quand un automorphisme est borné. En revanche,
à partir d'un (tout) type générique, on peut définir un opérateur
cl\^oture adéquat~: la cl\^oture d'une partie $D$ de $K$ est
l'ensemble des éléments \emph{co-étrangers} sur $D$ aux génériques 
(\emph{cf.} définition \ref{D:cl}).
Dans le cas des purs corps algébriquement clos, cette clôture
correspond à la clôture algébrique~; dans le cas des corps
différentiellement clos ou aux différences génériques, la clôture  de
$D$ est l'ensemble des  éléments fini-dimensionnels au-dessus de $D$. 

 Un automorphisme $\tau$ est alors borné s'il existe un ensemble fini $A$ 
 tel que pour tout générique $b$ sur $A$, l'image $\tau(b)$ appartient à la clôture 
de $A \cup\{b\}$. 

Dans cet article, nous donnons une preuve uniforme (cf. Théorème \ref{T:noborne}), qui s'inspire  
de celle de Ziegler pour les purs corps algébriquement clos \cite{mZ91}, 
 permettant de caractériser les automorphismes bornés des corps munis d'opérateurs suivants~:
\begin{teo}

Pour les corps munis d'opérateurs suivants~:

\begin{itemize}
\item les corps algébriquement clos $(K,\mathrm{Id})$ en toute 
caractéristique avec automorphisme associé l'identité~;
\item les corps différentiellement clos $(K,\dd_1,\ldots,\dd_n)$  en 
caractéristique nulle avec $n$ dérivations qui commutent avec 
automorphisme associé l'identité~;
\item les corps aux différences génériques $(K,\sigma)$ en toute 
caractéristique avec automorphisme associé $\sigma$~;
\item les corps séparablement clos de degré d'imperfection fini avec une 
$p$-base nommée et automorphisme associé  l'identité~;
\item les corps différentiels aux différences $(K,\dd,\si)$ en 
caractéristique nulle avec automorphisme associé $\si$, qui commute 
avec $\dd$~;
\item les corps $(K,F_1,\ldots,F_n)$ munis d'opérateurs libres en 
caractéristique nulle, avec automorphismes associés 
$\si_0,\ldots, \si_t$~;
\end{itemize}

\noindent tout automorphisme borné est un produit des automorphismes 
associés et de leurs inverses, ainsi que du Frobenius et son 
inverse, si le corps est parfait de caractéristique positive. 
\end{teo}

En particulier, on retrouve le résultat de Ziegler~: le
groupe des automorphismes forts d'un corps algébriquement clos
en caractéristique positive  est simple modulo le sous-groupe
cyclique engendré par le Frobenius. 

\begin{rmque}
Notons que les automorphismes associés sont des automorphismes 
corpiques mais pas nécessairement des automorphismes de la 
structure~: par exemple, pour un corps muni de deux automorphismes 
libres distincts, chacun de ces automorphismes est un automorphisme associé.  
\end{rmque}

Nous remercions le rapporteur anonyme pour ses suggestions et remarques qui nous ont permis d'améliorer la présentation de ce travail. Le troisième auteur tient à s'excuser auprès de F. O. Wagner pour  
avoir mis du temps à comprendre l'intérêt de la clôture analysable. En s'inspirant 
de cet article, Wagner a par ailleurs démontré \cite{Wa15} que tout 
automorphisme borné d'un corps ayant une théorie simple est 
définissable. 

\section{Corps munis d'opérateurs}

Dans ce travail, nous considérons des corps munis d'opérateurs additifs.  Les premiers exemples sont les corps  algébriquement clos, les corps différentiellement 
 clos, ainsi que les corps algébriquement clos aux différences. Moosa et Scanlon \cite{MS14} ont 
développé un formalisme pour traiter simultanément ces trois cas. 
Pour la présentation des résultats, nous allons supposer que les opérateurs sont en nombre fini. Pour les corps séparablement clos de degré d'imperfection fini \cite{cW79, fD88}, voir la remarque \ref{R:SCF}.

\begin{definition}\label{D:corpsOp}
Un \emph{corps muni d'opérateurs} sur un sous-corps de base 
$\F\subset K$ est
une structure  
$$(K,0,1,+,-,\cdot,\{\lambda\}_{\lambda\in\F},F_1,\ldots,F_n)$$
 telle que~:
\begin{enumerate}
 \item les opérateurs $F_1,\ldots,F_n$ sont $\F$-linéaires et vérifient
pour tous $x$ et $y$ dans $K$,
\[ F_k(xy)=\sum\limits_{0\leq i,j\leq n} a_{i,j}^k  F_i(x)F_j(y),\]
\noindent pour certaines constantes $\{a_{i,j}^k\}_{0\leq i,j,k\leq n}$ dans $\F$ 
(avec $F_0$ l'identité)~;
\item le 
$\F$-espace vectoriel $\F \e_0 \oplus \ldots \oplus \F \e_n$ est une 
$\F$-algèbre commutative, avec
\[\e_i  \e_j = \sum\limits_{0\leq k\leq n} a_{i,j}^k \e_k.\]
\end{enumerate} 
 
\end{definition} 
\noindent Une telle structure est bi-interprétable avec 
$(K,0,1,+,-,\cdot,\{\lambda\}_{\lambda\in\F}, D(K), \varphi)$, où
$D(K)$ est une $K$-algèbre égale au $K$-espace vectoriel 
$K \e_0 \oplus \ldots \oplus K \e_n$  de  dimension $n+1$, avec un 
morphisme de $\F$-algèbres $\varphi:K\to D(K)$ tel que la projection de 
$D(K)$ sur la première 
coordonnée composée avec $\varphi$ soit l'identité. 

\noindent Pour cela, il suffit de 
poser \[\varphi(x)=\sum\limits_{0\leq k\leq n} F_k(x) \e_k.\] Cette
approche est celle de  \cite{MS14}.

La $\F$-algèbre $D(\F)=\F \e_0 \oplus \ldots \oplus \F \e_n$ étant de 
dimension finie, elle est isomorphe à un produit de $\F$-algèbres locales 
$B_0(\F), \ldots, B_t(\F)$ \cite[Theorem 8.7]{AM69}. Les corps résiduels de 
ces algèbres locales sont des extensions finies de $\F$, et donc égaux à 
$\F$ dès que $\F$ est algébriquement clos.   Moosa et Scanlon donnent un 
exemple sur $\Q$ avec deux opérateurs \cite[Example 4.2]{MS14} de corps 
résiduel $\Q(\sqrt 2)$. En revanche, ce phénomène n'existe pas pour 
un seul opérateur non trivial.

\begin{prop}\label{P:calculsPourUnOperateur}
Soit $(K,0,1,+,-,\cdot,\{\lambda\}_{\lambda\in\F}, F)$ un corps muni d'un
seul opérateur $F$. Alors, il existe des  constantes $a$, $b$ et $c$ dans 
$\F$ vérifiant $b^2-b = ac$ telles que
$$F(xy) = axy+b(xF(y)+yF(x))+cF(x)F(y)$$ pour tous $x$ et $y$ dans $K$. De 
plus, l'algèbre $D(\F)$ est ou bien locale de corps résiduel $\F$, ou 
bien isomorphe à $\F^2$. 

Enfin, la structure est soit  bi-interprétable avec celle de pur corps 
au-dessus de $\F$, soit bi-interprétable avec $K$ muni d'une dérivation ou 
d'un endomorphisme non-trivial.
\end{prop}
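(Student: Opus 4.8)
The plan is to unwind the bi-interpretation $(K,\dots,D(K),\varphi)$ and to work inside the two-dimensional commutative $\F$-algebra $D(\F)=\F\e_0\oplus\F\e_1$ together with its distinguished $\F$-algebra homomorphism $\pi_0\colon D(\F)\to\F$ onto the $\e_0$-coordinate (the section from the formalism of \cite{MS14}, so $\pi_0(\e_0)=1$ and $\pi_0(\e_1)=0$).

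First I would read the hypotheses off as relations between the structure constants. Commutativity of $D(\F)$ forces $a_{0,1}^1=a_{1,0}^1$, so that, putting $a=a_{0,0}^1$, $b=a_{0,1}^1$, $c=a_{1,1}^1$, the Leibniz rule of Definition~\ref{D:corpsOp} is literally $F(xy)=axy+b(xF(y)+yF(x))+cF(x)F(y)$, which is the announced formula. Since $\pi_0$ is a unital $\F$-algebra homomorphism, applying it to $\e_i\e_j$ yields $a_{0,0}^0=1$ and $a_{0,1}^0=a_{1,0}^0=a_{1,1}^0=0$, i.e.
\[\e_0^2=\e_0+a\e_1,\qquad \e_0\e_1=b\e_1,\qquad \e_1^2=c\e_1,\]
and then associativity, through $(\e_0\e_0)\e_1=\e_0(\e_0\e_1)$, gives $(b+ac)\e_1=b^2\e_1$, that is $b^2-b=ac$.

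Next I would determine $D(\F)$. As $\e_0$ and $\e_1$ are linearly independent, $\e_1$ is not a scalar, so $\{1,\e_1\}$ is a basis, $D(\F)=\F[\e_1]$, and since $\e_1^2=c\e_1$ we get $D(\F)\cong\F[X]/(X^2-cX)$. If $c\neq0$, the Chinese remainder theorem gives $D(\F)\cong\F[X]/(X)\times\F[X]/(X-c)\cong\F^2$; if $c=0$, then $D(\F)\cong\F[X]/(X^2)$ is local with maximal ideal $\F\e_1$ and residue field $\F$ — and in that case the existence of a unit in $D(\F)$, checked against the above table, forces $b=1$. The one conceptual point is that the remaining a priori possibility for a two-dimensional commutative $\F$-algebra, a quadratic field extension of $\F$, is ruled out precisely because such an algebra admits no $\F$-algebra homomorphism to $\F$, while $\pi_0$ is one.

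Finally I would recover the operator in each case. Writing $\e_0=1+\beta\e_1$ with $\beta\in\F$ (legitimate since $\pi_0(\e_0)=1$), one has $\varphi(x)=x+g(x)\e_1$ with $g:=\beta\,\mathrm{Id}+F$, and $\varphi$ being a unital $\F$-algebra homomorphism becomes $g(xy)=xg(y)+yg(x)+cg(x)g(y)$. When $c=0$ this is exactly the Leibniz rule for an $\F$-derivation $g$ of $K$, and $F=g-a\,\mathrm{Id}$ is interdefinable with it. When $c\neq0$, under $D(K)\cong K\times K$ the map $\varphi$ becomes $x\mapsto(x,\si(x))$ with $\si:=\mathrm{Id}+cg=b\,\mathrm{Id}+cF$, which is therefore an $\F$-algebra endomorphism of $K$, and $F=c\inv(\si-b\,\mathrm{Id})$ is interdefinable with it. In both cases, if the derivation $g$ (resp. the endomorphism $\si$) is trivial then $F$ is multiplication by a constant of $\F$, so the structure is a definitional expansion of the pure field over $\F$; otherwise it is bi-interpretable with $K$ equipped with a nontrivial derivation (resp. nontrivial endomorphism). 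I do not expect a serious obstacle here: the identification of $K[\e]/(\e^2)$-rings with differential rings and of $K^2$-rings with difference rings is classical, and every change of basis above is $\F$-definable and hence compatible with bi-interpretability; the only thing to watch is which coordinate, respectively residue map, is the distinguished $\pi_0$.
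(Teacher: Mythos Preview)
Your overall strategy is the same as the paper's and the second half (the casework on $c$, the description of $D(\F)$, and the recovery of a derivation or an endomorphism from $\varphi$) is correct and in places tidier than the original: your identification $D(\F)\cong\F[X]/(X^2-cX)$ and the unit argument forcing $b=1$ when $c=0$ are genuinely cleaner than the paper's treatment.

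There is, however, one real gap at the very first step. You assert that the coordinate projection $\pi_0\colon D(\F)\to\F$ is an $\F$-algebra homomorphism and immediately deduce $a_{0,0}^0=1$, $a_{0,1}^0=a_{1,0}^0=a_{1,1}^0=0$. In the Moosa--Scanlon formalism this is indeed part of the data, but in this paper's Definition~\ref{D:corpsOp} the only requirement is that $\pi_0\circ\varphi=\mathrm{Id}_K$; nothing forces $\pi_0$ itself to be multiplicative with respect to an arbitrary basis $\{\e_0,\e_1\}$. The paper therefore starts with the a priori relations $\e_0\e_1=\alpha\e_0+b\e_1$ and $\e_1^2=\beta\e_0+c\e_1$, reads off from $\pi_0(\varphi(x)\varphi(y))=xy$ the identity $\alpha(xF(y)+yF(x))+\beta F(x)F(y)=0$ for all $x,y\in K$, and then argues (using that $K$ is a field and that $F$ is not a scalar multiple of the identity) that $\alpha=\beta=0$. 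That argument is short but not entirely trivial --- it really uses values in $K$, not just the algebra $D(\F)$ --- and without it your multiplication table, the associativity computation yielding $b^2-b=ac$, and the later unit argument for $b=1$ are not yet justified. Once you insert this step (or explain why, in your reading of the definition, $\pi_0$ is a ring map), the rest of your plan goes through.
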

\begin{proof}
Si $F=\lambda\mathrm{Id}$, pour un certain $\lambda$ dans $\F$, alors 
$b=c=0$ et $a=\lambda$ conviennent. L'algèbre $D(\F)$ est isomorphe 
à 
$\F^2$ et la structure est bi-interprétable avec celle de pur corps 
au-dessus de $\F$. Supposons maintenant que $F$ soit $\F$-linéairement 
indépendant de l'identité et que la $\F$-algèbre $D(\F)=\F \e_0 \oplus  \F 
\e_1 $ soit
donnée par les relations suivantes : 
$$
\left. \begin{array}{cc}
        \e_0^2 &= \e_0 + a\e_1 \\
         \e_0\e_1 &= \alpha \e_0 + b \e_1  \\
        \e_1^2 &=\beta \e_0 + c \e_1
    \end{array} \right\}
$$
où les constantes $a$, $b$, $c$, $\alpha$ et $\beta$ appartient à
$\F$. En particulier, puisque $\varphi$ est un homomorphisme
d'anneaux, on obtient que 
\begin{multline*}
\varphi(xy)= xy\e_0  + F(xy)\e_1= \Big(xy+
\alpha(xF(y)+yF(x)) +\beta F(x)F(y)\Big)\e_0 + \\ 
\Big(axy+b(xF(y)+yF(x))
+c F(x)F(y)\Big)\e_1.\end{multline*}

\noindent Puisque la projection de $\varphi$ sur la première 
coordonnée est
l'identité sur $K$, on en déduit que $\alpha(xF(y)+yF(x)) +\beta
F(x)F(y)=0$ pour tous $x$ et $y$ dans $K$. En posant $x=y$, on obtient pour
tout $x$ dans $K$,
$$F(x) (2 \alpha x + \beta F(x)) =0.$$
Vérifions que $\alpha = \beta =0$. Si $\alpha =0$, alors $\beta =0$, car 
$F$ n'est pas l'opérateur trivial nul. Supposons $\alpha\neq 0$. Dans ce 
cas, l'opérateur $F$ ne s'annule qu'en $0$~: soient $x_0$ et $y_0$ tels que
$F(x_0)\neq 0$ et $F(y_0) = 0$. Alors $F(x_0+y_0) = F(x_0) \neq 0$ et donc 
$2 \alpha (x_0+y_0) = -\beta F(x_0+y_0)  = -\beta F(x_0) = 2 \alpha x_0$. 
On conclut que $y_0 =0$ et $F$  est  colinéaire à l'identité.

Comme $\alpha = \beta =0$, il suit que  l'idéal engendré par
$\e_1$ est un idéal maximal de $D(\F)$. 

L'égalité  $\e_0^2\e_1=\e_0(\e_0\e_1)$ nous permet de déduire que
$b^2=b+ac$. De plus, l'opérateur $F$ satisfait :
$$F(xy) = axy+b(xF(y)+yF(x))+cF(x)F(y).$$

Si $c=0$, alors $b\neq 0$, car $F$ est linéairement indépendant de
l'identité. Puisque $(\e_0+\lambda \e_1)\e_1=b\e_1$ pour tout $\lambda$
dans $\F$, le seul idéal maximal de $D(\F)$ est l'idéal engendré par
$\e_1$. Comme $1=\varphi(1)\equiv \e_0 \mod (\e_1)$, on conclut que
$D(\F)$ est locale avec corps résiduel $\F$. De plus, comme  
$b^2=b+ac$, alors $b=1$ et l'application $\dd = F + a \mathrm{Id}$ est une 
dérivation.

Si $c\neq 0$, alors on pose $\e'_1=c\inv \e_1$ et
$\e'_0=\e_0-b\e'_1$. On obtient ainsi une base orthonormale de
$D(\F)$, qui est donc isomorphe à $\F^2$. L'application 
$\si = cF+b\mathrm{Id}$ est alors un morphisme de corps qui ne peut être 
trivial.
\end{proof}

\begin{remark}\label{R:residue_nder}
Les opérateurs sur les corps considérés par Moosa et Scanlon sont libres, 
dans le sens qu'ils ne satisfont aucune relation \cite[Remark 3.8]{MS14}. 
Ici, nous n'imposons pas cette liberté pour les opérateurs. Ainsi, les 
corps différentiels avec $n$ dérivations qui commutent et les corps 
différentiels aux différences sont également des corps munis d'opérateurs 
selon la 
définition \ref{D:corpsOp}. Soit $(K,\dd_1,\ldots,\dd_n)$ un corps muni de 
$n$ 
dérivations qui commutent. Notons par $\F$ le corps premier de $K$ et soit
$D(\F)$ l'algèbre correspondante engendrée par 
$1=\e_0,\e_1,\ldots,\e_n$ avec les relations suivantes~:

$$
\left. \begin{array}{ccl}
        \e_i^2 &= 0  &\text{ si } 0<i\leq n\\
         \e_i\e_j &= 0 &\text{ si } i\neq j \text{ et } i,j>0.
    \end{array} \right\}
$$
Alors $D(\F)$ est locale avec idéal maximal $\Max=(\e_1,\ldots,\e_n)$ et 
corps résiduel $\F$.

Si $(K,\dd,\si)$ est un corps muni d'une 
dérivation et d'un endomorphisme qui commutent \cite{rB11}, alors, 
au-dessus du corps premier $\F$, l'algèbre $D(\F)$ engendrée par 
$\{\e_0,\e_1,\e_2\}$ satisfait les relations suivantes~:

$$
\left. \begin{array}{rl}
         1 &= \e_0+\e_2 \\
         \e_i^2 &= \e_i  \text{\hskip4mm  si } i\neq 1\\
         \e_1^2 &=0 \\
         \e_0\e_1 &= \e_1 \\
         \e_0\e_2 &= 0 \\
         \e_1\e_2 &= 0.
    \end{array} \right\}
$$
Les seuls idéaux maximaux de $D(\F)$ sont 
$\Max_0=(\e_1,\e_2)$ et $\Max_1=(\e_0, \e_1)$. Notons que 
$(\Max_0\Max_1)^2=(\e_1)^2=0$. Chaque algèbre $B_i(\F)=D(\F)/\Max_i^2$ 
est locale d'idéal maximal l'image de $\Max_i$ modulo $\Max_i^2$ et corps 
résiduel $\F$. L'algèbre $D(\F)$ est isomorphe au produit $B_0(\F)\times 
B_1(\F)$, par le théorème des restes chinois. 
\end{remark}

\begin{hyp}\label{H:Residue} (\emph{cf.} Assumption 4.1 \cite{MS14}) 
Chaque algèbre locale associée à $D(\F)$ a pour corps résiduel 
$\F$.
\end{hyp}

En tensorisant chaque algèbre locale par $K$, si $\theta_i$, 
respectivement $\rho_i$, dénote  la projection de $D(K)$ sur $B_i(K)$, 
respectivement la
projection résiduelle de $B_i(K)$ sur $K$, on obtient un endomorphisme
 $\sigma_i =  \rho_i\circ \theta_i \circ \varphi$ de $K$. Notons que
l'identité est l'un de ces endomorphismes, que l'on suppose être
$\sigma_0$. Ces endomorphismes sont combinaisons $\F$-linéaires des
opérateurs et ils ne dépendent que de la structure 
$(K,0,1,+,-,\cdot,\{\lambda\}_{\lambda\in\F}, D(K), \varphi)$. En 
particulier, si l'on considère une transformation $\F$-linéaire inversible 
des opérateurs, on obtient une structure bi-interprétable ayant les mêmes 
endomorphismes associés. 

\noindent Dans le cas d'un seul opérateur $F$ avec constantes $a$,
$b$ et $c$, comme dans la proposition \ref{P:calculsPourUnOperateur}, les 
endomorphismes obtenus sont l'identité et  $\si=b\mathrm{Id}+cF$ si 
$c\neq 0$. Lorsque $c=0$, l'identité est le seul endomorphisme associé.  
Il en est de même pour les corps avec $n$ dérivations qui commutent. Si
$(K,\dd,\si)$ est comme dans la remarque \ref{R:residue_nder}, alors les 
endomorphismes associés sont l'identité et  $\si$.

\begin{hyp}\label{H:Endo}
Par la suite, les endomorphismes $\sigma_1,\ldots,\sigma_t$ du corps 
 muni d'opérateurs $(K,0,1,+,-,\cdot,F_1,\ldots,F_n)$ sont des 
automorphismes de corps. Ceci est le cas pour les modèles 
existentiellement clos dans la classe des corps en caractéristique nulle 
munis d'opérateurs libres \cite[Lemma 4.11]{MS14}, ainsi que pour les corps 
aux différences 
génériques, en toute caractéristique \cite{ChHr99} ou pour les corps 
différentiels aux différences \cite{rB11}. 
\end{hyp} 

\begin{prop}\label{P:Triang_Charlotte}
Étant donné un corps muni d'opérateurs 
$$(K,0,1,+,-,\cdot,\{\lambda\}_{\lambda\in\F},F_1,\ldots,F_n)$$ 
satisfaisant l'hypothèse (\ref{H:Residue}), il existe une transformation 
$\F$-linéaire inversible qui rend les opérateurs \emph{triangulaires}~:

\[F_j(xy)=\sigma_{i_j} (x) F_j(y) + \sum\limits_{l<j} R_{j,l}(x) F_l(y),\]
où chaque $R_{j,l}(x)$ est un polynôme à coefficients sur $\F$ en les
symboles $\{F_r(x)\}_{0\leq r\leq j}$.
\end{prop}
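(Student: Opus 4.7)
Le plan est d'exploiter la d�composition $D(\F)=B_0(\F)\times\cdots\times B_t(\F)$ en alg�bres locales donn�e par l'hypoth�se (\ref{H:Residue}), et de choisir dans chaque facteur $B_i(\F)$ une base adapt�e � la filtration par les puissances de son id�al maximal $\Max_i$~: l'idempotent $\eta_i$ du bloc, suivi d'une base $\{f^i_1,\ldots,f^i_{n_i}\}$ de $\Max_i$ class�e par profondeur croissante, o� la profondeur de $f^i_k$ est $d^i(k)=\max\{d\geq 1 : f^i_k\in\Max_i^d\}$. Quitte � r�indexer, on peut supposer que $\eta_0$ correspond au bloc dont l'endomorphisme r�siduel est $\sigma_0=\mathrm{Id}$.

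On assemble alors une base globale $\e'_0,\ldots,\e'_n$ de $D(\F)$ en juxtaposant les $\eta_0,\eta_1,\ldots,\eta_t$, puis tous les $f^i_k$ rang�s par profondeur globale croissante (les bris d'�galit� �tant arbitraires). Ce changement de base $\F$-lin�aire inversible fournit de nouveaux op�rateurs $F'_j$, combinaisons $\F$-lin�aires des op�rateurs initiaux. Par construction, l'op�rateur associ� � $\eta_i$ est pr�cis�ment $\sigma_i=\rho_i\circ\theta_i\circ\varphi$, et en particulier $F'_0=\mathrm{Id}$.

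Pour v�rifier la triangularit�, on calcule $\varphi(xy)=\varphi(x)\varphi(y)$ dans la nouvelle base, en utilisant $\eta_i\eta_j=\dd_{ij}\eta_i$, $\eta_if^j_k=\dd_{ij}f^i_k$ et $f^i_kf^{i'}_{k'}\in\dd_{ii'}\Max_i^{d^i(k)+d^i(k')}$. Pour $\e'_j=f^i_l$, l'identification des coefficients donne
\[
F'_j(xy)=\sigma_i(x)F'_j(y)+F'_j(x)\sigma_i(y)+\sum_{d^i(k)+d^i(k')\leq d^i(l)} c^{(i,l)}_{k,k'}\, F^i_k(x)F^i_{k'}(y),
\]
o� $F^i_k$ d�signe l'op�rateur associ� � $f^i_k$ et $c^{(i,l)}_{k,k'}\in\F$ est le coefficient de $f^i_l$ dans $f^i_kf^i_{k'}$. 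Comme $d^i(k),d^i(k')\geq 1$, la contrainte $d^i(k)+d^i(k')\leq d^i(l)$ force $d^i(k),d^i(k')<d^i(l)$, donc les indices globaux correspondants sont strictement inf�rieurs � $j$~; de m�me, l'indice $m_i$ de $\eta_i$ satisfait $m_i\leq t<j$. Le terme $F'_j(x)\sigma_i(y)=F'_j(x)F'_{m_i}(y)$ et les autres termes bilin�aires s'absorbent alors dans une somme $\sum_{l'<j}R_{j,l'}(x)F'_{l'}(y)$ avec $R_{j,l'}$ polyn�me en les $F'_r(x)$ pour $r\leq j$.

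Le principal obstacle sera de v�rifier qu'aucun terme $F'_{l'}(y)$ avec $l'>j$ n'intervient, c'est-�-dire que le coefficient de $\e'_j$ dans $\e'_\alpha\e'_{l'}$ est nul pour tout $\alpha$ et tout $l'>j$. Ceci se ram�ne � une analyse par cas utilisant l'orthogonalit� des idempotents et la croissance de la profondeur~: pour $l'>j>t$, on a $\e'_{l'}=f^{i'}_{k'}$ avec $d^{i'}(k')\geq d^i(l)$, et le produit $\e'_\alpha\e'_{l'}$ est soit nul (blocs distincts ou absorption par orthogonalit�), soit contenu dans une puissance $\Max_i^{d}$ avec $d>d^i(l)$, donc ne pr�sente pas de composante sur $f^i_l$.
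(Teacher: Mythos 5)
You take essentially the same route as the paper: decompose $D(\F)$ into its local factors, use the residue-field hypothesis to split each $B_i(\F)$ as $\F\,1_{B_i}\oplus\Max_i$, and then produce a basis of $\Max_i$ in which multiplication carries each basis vector into the span of strictly later ones. The paper obtains such a basis by invoking simultaneous lower-triangularization of the commuting nilpotent multiplication maps (giving $\eta_k\eta_l=\sum_{m>\max(k,l)}b_{k,l}(m)\eta_m$); you instead appeal to the filtration by powers of $\Max_i$ together with a basis ordered by depth $d^i(k)=\max\{d:f^i_k\in\Max_i^d\}$. These two constructions are morally equivalent, and your final bookkeeping (matching the coefficient of $\e'_j$ in $\varphi(x)\varphi(y)$, the role of the idempotents, the cross-block vanishing) matches the paper's.

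There is, however, a genuine gap in the depth argument as written. From $f^i_kf^i_{k'}\in\Max_i^{d^i(k)+d^i(k')}$ you conclude that the coefficient of $f^i_l$ in this product vanishes whenever $d^i(l)<d^i(k)+d^i(k')$. That inference needs more than choosing \emph{some} basis of $\Max_i$ and sorting it by depth: it requires the basis to be \emph{compatible with the filtration}, i.e.\ for every $d$ the set $\{f^i_m:d^i(m)\geq d\}$ must span $\Max_i^d$. Otherwise an element of $\Max_i^d$ can very well have nonzero coordinates on shallow basis vectors. Concretely, if $\Max$ is $3$-dimensional with $\Max^2=\F g$ and $\Max^3=0$, then $\{h_1,\,h_2,\,h_1+g\}$ (where $\{h_1,h_2,g\}$ is an adapted basis) consists entirely of depth-$1$ vectors, so it is vacuously ``ordered by depth'', yet $g=-h_1+(h_1+g)$ has a nonzero coordinate on the depth-$1$ vector $h_1$. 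Your triangularity claim would therefore fail for that basis. The fix is cheap: build the basis inductively from the top of the filtration (take a basis of $\Max_i^{s_i}$, extend it to a basis of $\Max_i^{s_i-1}$, and so on), which is exactly what the paper's simultaneous-triangularization step delivers automatically. Once you make that choice explicit, the rest of your calculation goes through.
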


\begin{proof}
Par une première transformation linéaire inversible, on peut supposer que 
la base linéaire $(\e_0, \ldots, \e_n)$ du $\F$-espace vectoriel $D(\F)=\F 
\e_0 
\oplus \ldots \oplus \F \e_n$ est l'union de bases linéaires des algèbres 
locales $B_i(\F)$ associées. 

Il reste alors à traiter le cas de chaque algèbre locale, que l'on dénote 
$(B(\F), \Max)$. Par l'hypothèse \ref{H:Residue} et le théorème principal
de Wedderburn-Mal'cev \cite[p. 209]{Pie82}, l'algèbre $B(\F)$ peut 
s'écrire, en tant que $\F$-espace vectoriel, comme une somme directe de 
$\F 1_{B(\F)}$ et $\Max$. Prenons une base $\F$-linéaire
$\{\e'_1,\ldots,\e'_{r}\}$ de $\Max$.  Notons $\sigma$ l'endomorphisme de 
$K$ correspondant et 
$\theta$ la projection de $D(K)$ sur $B(K)$. Alors pour tout $x$ dans $K$,
$$\theta \circ \varphi (x) = \sigma(x) + F'_1(x) \e'_1 +\ldots + F'_r(x) 
\e'_r $$ où les $F'_j$ (y compris $\sigma = F'_0$) sont les opérateurs 
obtenus par ce changement de base.

La multiplication par chaque $\e'_j$  est une application nilpotente de 
$B(\F)$, car $\Max^s=0$ pour $s\gg 0$. De plus, les applications 
$\e'_j$ 
commutent entre elles. Il existe ainsi une base $\{\eta_1,\ldots,\eta_{r}\}$
de $\Max$ telle que toutes les applications multiplication par  $\e'_j$ 
pour $j \leq r$ sont triangulaires inférieures dans cette base avec unique 
valeur propre $0$. Puisque chaque $\eta_k$ est  combinaison linéaire des 
$\e'_j$, les multiplications par $\eta_k$ sont également triangulaires 
inférieures dans la base $\{\eta_1,\ldots,\eta_{r}\}$. De plus, comme ces 
applications commutent, 
on obtient des constantes $\{b_{k,l}(j)\}_{j,k,l \leq r} \in \F$
telles que :
\begin{equation}\label{eq:sum} \tag{\dag}
\eta_k \eta_l =\sum\limits_{j>\max{k,l}} b_{k,l}(j) \,
\eta_j.
\end{equation} 

Ainsi, après une nouvelle transformation linéaire inversible, le morphisme
d'anneaux $\theta \circ \varphi$ s'écrit sous la forme

$$\theta \circ \varphi (x) = \sigma(x)  + F''_1(x) \eta_1 +\ldots +
F''_r(x) \eta_r \,.$$ 

\noindent L'identité $\theta \circ \varphi(xy)=\theta \circ \varphi(x) 
\cdot \theta \circ \varphi(y)$ et (\ref{eq:sum}) donnent 

$$\begin{array}{rcl} F''_j(xy)&= &\sigma(x) F''_j(y) 
+  F''_j(x) \sigma(y) +\sum\limits_{0<k,l < j} b_{k,l}(j) F''_k(x)F''_l(y)\\
&=& \sigma(x) F''_j(y) + \sum\limits_{l<j} R'_{j,l}(x) F''_l(y),
\end{array}$$
où $F''_0(y) = \sigma(y)$, $R'_{j,0}(x) = F''_j(x)$ et
$R'_{j,l}(x) = \sum\limits_{0<k<j}  b_{k,l}(j) F''_k(x)$ pour $0<l<j$.

En recollant les bases obtenues pour chacune des algèbres locales, 
on en déduit le résultat voulu.
\end{proof}

Nous considérons par la suite la famille $\THT$  des mots dans les 
opérateurs $F_1,\ldots,F_n$, $\sigma_1\inv,\ldots,\sigma_t\inv$, 
ainsi que   $\Frob$ (où $\Frob$ désignera l'endomorphisme de Frobenius 
en caractéristique positive et  l'identité 
en caractéristique nulle) munie de
l'ordre lexicographique à partir des relations suivantes :

$$ \si_i\inv < \si_j\inv < F_i < F_j < \Frob \text{ pour } i<j.$$

\noindent \`A chaque combinaison  $K$-linéaire $S(x)$ de mots en $x$, 
l'on associe 
son \emph{degré}, le plus grand mot qui apparaît avec coefficient non-nul 
dans $S$. Ce coefficient sera appelé le \emph{coefficient dominant} de $S$.

\begin{cor}\label{C:Trig_Charlotte}
Supposons que les opérateurs $F_1,\ldots, F_n$ sont triangulaires et,
suivant les notations de la proposition \ref{P:Triang_Charlotte}, 
posons 
$\si_{F_j} =\si_{i_j}$ et $\si_{\si_j\inv}=\si_j\inv$, ainsi que $\si_{\Frob} = \Frob$. 
Ainsi, chaque mot $\tht$ de la famille $\THT$ détermine un produit $\si_{\tht}$ en les endomorphismes 
 correspondants aux lettres dans $\tht$, qui sera alors une composition d'une puissance du Frobenius avec des 
puissances entières des automorphismes associés au corps muni d'opérateurs. 

Si $S(x)$ est une combinaison $K$-linéaire de mots en $x$ de degré $\tht$
et coefficient dominant $\lambda_{\tht}$, alors pour tout $g$ dans $K$  on a

 \[S(gx)=\lambda_{\tht}\si_{\tht} (g) \tht(x) + R(x),\]
où $R(x)$ est une combinaison $K$-linéaire de mots en $x$ de degré 
strictement inférieur à $\tht$.
\end{cor}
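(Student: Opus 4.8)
The plan is to reduce to the case where $S$ is a single word $\tht$, and then to prove the identity $\tht(gx)=\si_\tht(g)\,\tht(x)+R(x)$ by induction on the length of $\tht$.

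Every operator, and hence every word in $\THT$ seen as a map $K\to K$, is additive, so the general statement will follow from the single-word case: writing $S=\lambda_\tht\,\tht+\sum_{\omega<\tht}c_\omega\,\omega$ and using $\omega(gx)=\si_\omega(g)\,\omega(x)+R_\omega(x)$ with $\deg R_\omega<\omega$ for each word $\omega$, one gets $S(gx)=\lambda_\tht\si_\tht(g)\,\tht(x)+\big(\lambda_\tht R_\tht(x)+\sum_{\omega<\tht}c_\omega(\si_\omega(g)\,\omega(x)+R_\omega(x))\big)$, and the parenthesised term is a $K$-linear combination of words of degree $<\tht$. If $g=0$ every word vanishes and the claim is trivial (with $R=0$), so we may assume $g\neq0$, in which case $\si_\tht(g)\neq0$ because $\si_\tht$, being a composite of field embeddings and powers of Frobenius, is injective; then $\lambda_\tht\si_\tht(g)\,\tht(x)$ has degree exactly $\tht$.

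We now fix a word $\tht$ and induct on its length, the empty word being trivial. Because the lexicographic order on $\THT$ is \emph{not} a well-order — for instance $F_n>\si_t\inv F_n>\si_t\inv\si_t\inv F_n>\cdots$ — length is the right induction parameter. Write $\tht=\phi\cdot\tht'$ with $\phi$ the first letter and $\tht'$ strictly shorter, so that $\si_\tht=\si_\phi\circ\si_{\tht'}$ is the composite obtained by reading off the factor attached to each letter ($\si_{i_j}$ for $F_j$, $\si_s\inv$ for $\si_s\inv$, and $\Frob$ for $\Frob$); in particular $\si_\tht$ is a composition of a power of Frobenius with integer powers of the associated automorphisms, as claimed. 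By the induction hypothesis $\tht'(gx)=\si_{\tht'}(g)\,\tht'(x)+R'(x)$ with $R'(x)=\sum_k c_k\,\omega_k(x)$ a $K$-linear combination of words $\omega_k$ of degree $<\tht'$, and applying the additive map $\phi$ gives $\tht(gx)=\phi\big(\si_{\tht'}(g)\,\tht'(x)\big)+\phi(R'(x))$.

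It remains to expand the two summands by the multiplication rule for $\phi$, using that prepending a fixed letter preserves the lexicographic order. If $\phi$ is one of the automorphisms $\si_s\inv$ or the Frobenius, then $\phi$ is a field homomorphism, so $\phi(ab)=\phi(a)\phi(b)$ (invoking $(\sum a_i)^p=\sum a_i^p$ in positive characteristic to handle $\phi(R'(x))$); one gets $\tht(gx)=\si_\tht(g)\,\tht(x)+\sum_k\phi(c_k)\,(\phi\omega_k)(x)$, and $\phi\omega_k<\phi\tht'=\tht$ since $\omega_k<\tht'$. If $\phi=F_j$, the triangularity of Proposition \ref{P:Triang_Charlotte} gives, for every $u\in K$ and every word $\omega$, $F_j(u\,\omega(x))=\si_{i_j}(u)\,(F_j\omega)(x)+\sum_{l<j}R_{j,l}(u)\,(F_l\omega)(x)$, with $R_{j,l}(u)\in K$ because $u\in K$ and $R_{j,l}$ only involves the symbols $F_r$ with $r\le j$. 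Taking $u=\si_{\tht'}(g)$, $\omega=\tht'$ yields the leading term $\si_{i_j}(\si_{\tht'}(g))\,(F_j\tht')(x)=\si_\tht(g)\,\tht(x)$ plus terms in the words $F_l\tht'$ ($l<j$), and expanding $\phi(R'(x))=\sum_k F_j(c_k\,\omega_k(x))$ yields terms in the words $F_l\omega_k$ ($l\le j$). Since $F_l\tht'<F_j\tht'=\tht$ for $l<j$, and $F_l\omega_k\le F_j\omega_k<F_j\tht'=\tht$ whenever $\omega_k<\tht'$, all words occurring have degree strictly below $\tht$, completing the induction. The only genuinely delicate point is this order bookkeeping; the algebraic content is furnished entirely by Proposition \ref{P:Triang_Charlotte} together with the fact that Frobenius commutes with every field homomorphism.
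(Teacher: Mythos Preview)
The paper states this corollary without proof, so there is no argument to compare against directly; your induction on word length, peeling off the leftmost letter and invoking the triangular rule of Proposition~\ref{P:Triang_Charlotte}, is exactly the natural approach and is surely what the authors have in mind.

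There is, however, one genuine gap in the order bookkeeping. In the triangular formula the sum $\sum_{l<j}R_{j,l}(x)F_l(y)$ runs down to $l=0$, and $F_0$ is the identity. Hence the expansion of $F_j\big(u\cdot\theta'(x)\big)$ contains, besides the leading term and the terms in $F_l\theta'$ for $1\le l<j$, a contribution $R_{j,0}(u)\,\theta'(x)$ --- the bare word $\theta'$ with no letter prepended. Your assertion ``$F_l\theta'<F_j\theta'=\theta$ for $l<j$'' therefore demands, at $l=0$, that $\theta'<F_j\theta'$; in the \emph{pure} lexicographic order this fails whenever $\theta'$ begins with a letter larger than $F_j$. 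Already for two derivations $\dd_1<\dd_2$ one has $\dd_2>\dd_1\dd_2$, and
\[
(\dd_1\dd_2)(gx)=g\,\dd_1\dd_2(x)+\dd_1(g)\,\dd_2(x)+\dd_2(g)\,\dd_1(x)+\dd_1\dd_2(g)\,x
\]
exhibits a remainder term of degree $\dd_2>\theta$. The same defect recurs in your treatment of $F_j\big(c_k\,\omega_k(x)\big)$: the $l=0$ contribution is $\omega_k(x)$, and $\omega_k<\theta'$ does not force $\omega_k<\theta$ under pure lex.

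This is not merely a slip on your part but an imprecision in the paper: read literally with the pure lexicographic order, the corollary is false (the function $\dd_2$ is not a $K$-linear combination of words $<\dd_1\dd_2$). What is evidently intended --- and what makes the minimality argument in the proof of Theorem~\ref{T:noborne} go through --- is the length-first (\emph{degree-lexicographic}) order. Under that reading your proof is correct as written: every word produced by the expansion has length at most $|\theta|$; those of length $<|\theta|$ (namely $\theta'$ and the $\omega_k$ coming from $l=0$) are automatically smaller, and those of length exactly $|\theta|$ either begin with $F_l<F_j$, or are $F_j\omega_k$ with $\omega_k<\theta'$, or are $\phi\omega_k$ with $\omega_k<\theta'$ when $\phi\in\{\si_s\inv,\Frob\}$, and in each case the lexicographic tie-break gives the strict inequality.
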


\begin{remark}\label{R:Trig_SCF}
Notons que nous avons besoin d'un nombre fini d'opérateurs et de 
l'hypothèse $(\ref{H:Residue})$ pour pouvoir appliquer la proposition 
\ref{P:Triang_Charlotte} et rendre les opérateurs triangulaires. 
Pour les corps séparablement clos de degré  d'imperfection fini, chacune des 
dérivations de Hasse-Schmidt itératives  \cite{mZ03} est une suite infinie  mais déjà 
sous forme triangulaire avec automorphisme associé l'identité, donc ils satisfont
 l'hypothèse $(\ref{H:Endo})$. Le corollaire précédent s'applique.
\end{remark}

\section{Une promenade modèle-théorique}

Pour cette partie, nous nous plaçons à l'intérieur d'un corps 
$K$ muni d'opérateurs satisfaisant l'hypothèse $(\ref{H:Endo})$. 

\noindent Nous supposons de plus par la suite~:

\begin{hyp}\label{H:AlgebriquementClosPlusFrob}
Le corps $K$ est séparablement clos (éventuellement 
algébriquement clos), suffisamment saturé 
et homogène dans le langage $\LL$ qui 
étend celui des anneaux par des constantes pour  les éléments du sous-corps
 $\F$, par des symboles $F_1,\ldots,F_n, \sigma_1\inv,\ldots,\sigma_t\inv$ 
 pour les opérateurs et les automorphismes associés.
\end{hyp}

Étant donnée une partie $A$, nous notons $\sscl{A}$ le corps de 
fractions de la sous-structure engendrée par $A$ à l'intérieur de $K$. Si 
$k\subset K$ est un sous-corps, on dénote  $\alg k$ sa clôture 
algébrique au sens corpique.

Nous allons isoler des propriétés modèles-théoriques pour 
étudier les automorphismes bornés d'un tel corps. Deux uples $a$ et $b$ 
ont même type sur une partie  $D$, noté $a \equiv_D b$, s'il existe un
automorphisme fixant $D$ qui envoie $a$ sur $b$, pourvu que $K$ soit 
$|D|^+$-fortement homogène. Pour les
 corps munis d'opérateurs libres en caractéristique 
nulle \cite[Proposition 5.5 et Proposition 5.6]{MS14}, ainsi que pour les 
corps différentiels avec $n$ dérivations qui commutent \cite[Lemma 
3.15]{McG00} et pour les corps différentiels aux différences 
\cite[Proposition 1.1 et Theorem 1.3]{rB11}, les 
types dans la théorie de $K$ sont déterminés par le type aux différences de 
la $\LL$-structure engendrée par une réalisation, ce qui motive l'hypothèse
suivante.

\begin{hyp}\label{H:types}. La clôture algébrique 
(au sens modèle-théo\-ri\-que) d'un sous-ensemble $A$ coïncide avec $\alg
{\sscl{A}} \cap K$.

En outre, deux uples $a$ et $b$ ont même type sur $k=\sscl k$ si et 
seulement s'il existe un  $\LL$-isomorphisme entre $\alg {\sscl{k(a)}}\cap K$ et 
$\alg {\sscl{k(b)}}\cap K$ qui envoie $a$ sur $b$ fixant $k$.
\end{hyp}

Étant données des sous-parties $A$, $B$ et $C$ de $K$, l'on dit que $A$ 
\emph{est indépendant de} $B$ sur $C$, dénoté $A\ind_C B$, si les 
extensions $\alg{\sscl {A\cup C}}$ et $\alg{\sscl {B\cup C} }$ sont
linéairement disjointes sur $\alg{\sscl C}$. Notons que cette relation est 
symétrique, transitive, de caractère fini et invariante par 
automorphismes de $K$. De plus, elle a caractère local : pour tout uple
fini $a$ et toute partie $B$, il existe  $B_0\subset B$ de taille bornée par $|\LL|$, avec 
$a\ind_{B_0} B$.

\begin{remark}\label{R:pairwise}
Puisque chaque mot en $a\cdot b$ ou en $a+b$ s'exprime
comme une combinaison de mots en $a$ et $b$, si les
éléments $a$ et $b$ sont indépendants sur $C$, alors $a$ et $a\cdot
b$, ainsi que $a$ et $a+b$, le sont aussi.
\end{remark}

\begin{hyp}\label{H:Simple}
L'indépendance définie ci-dessus satisfait les conditions suivantes~:
\begin{enumerate}
\item Pour tout uple fini $a$ et toutes sous-parties $C\subset B$, il 
existe $a'\equiv_C a$ avec $a'\ind_C B$~; 
\item Étant donné un sous-corps $k=\alg{\sscl k}\cap K$ et des 
uples $a$ et $b$ avec $a\equiv_k b$, pour toutes parties $C$ et $D$ 
contenant  $k$ telles que $C\ind_k D$, si $a\ind_k C$ et $b\ind_k D$, alors
il existe $e\ind_k C\cup D$ tel que $e\equiv_{C} a$ et $e\equiv_{D} b$.
\end{enumerate}

En particulier, le théorème de Kim-Pillay \cite{KP97} donne que la 
$\LL$-théorie $T$ de $K$ est simple et que l'indépendance correspond à la 
non-déviation, définie par Shelah. 
\end{hyp}
Les exemples considérés satisfont l'hypothèse $(\ref{H:Simple})$~: voir 
\cite[Section 4.3]{McG00} pour les corps différentiels avec $n$ dérivations
qui commutent, ainsi que \cite[Theorem 5.9]{MS14} pour les corps munis 
d'opérateurs libres en caractéristique nulle et \cite[Theorem 1.3]{rB11} 
pour les corps différentiels aux différences. 

\noindent 
Nous renvoyons le lecteur à \cite{Wa00} pour une
introduction aux théories simples. 

\begin{definition}\label{D:SMorley}
La suite $\{a_i\}_{i<\omega}$ est \emph{indépendante} sur l'ensemble
$B$ si, pour tout $m<\omega$, on a 
\[a_{m+1}\ind_B a_1,\ldots,a_m.\]
La suite $\{a_i\}_{i<\omega}$ est \emph{indiscernable} sur l'ensemble
$B$ si pour tous $i_1<\ldots<i_m$, on a $a_1,\ldots,a_m \equiv_B
a_{i_1},\ldots,a_{i_m}$. 

\noindent
Étant donné un type $p$ à paramètres sur $B$, une  \emph{suite de
Morley} $\{a_i\}_{i<\omega}$ de $p$ est une suite indépendante et
indiscernable sur $B$ de réalisations de $p$. 
\end{definition}

Dans une théorie simple, tout type admet des suites de
Morley. Afin de faciliter la présentation, 
sans introduire ni des hyperimaginaires ni des imaginaires, nous supposons 
par la suite  que la théorie $T$ satisfait la condition suivante~:

\begin{hyp}\label{H:Forking}

Si $B=\alg{\sscl B}\cap K$, et $\{a_i\}_{i<\omega}$ est une suite de Morley
 du type $\tp(a/B)$, alors
\[a\ind_X B,\]
avec $X=B\cap\alg{\sscl{\{a_i\}_{i<\omega}}}$.
\end{hyp}

Cette propriété est toujours vérifiée si $T$ est supersimple avec  
élimination des imaginaires  \cite{BPW01}, ce qui est le cas des corps
algébriquement clos, des corps différentiellement clos de caractéristique 
$0$ avec $n$ dérivations qui commutent \cite[Corollary 3.3.2]{McG00}, des 
corps aux  différences génériques en toute caractéristique et des corps  
différentiels aux différences \cite[Proposition 3.36]{rB07}. Pour les corps 
munis d'opérateurs libres en  caractéristique $0$, cette propriété suit de 
\cite[Theorem 5.12 et Claim  6.17]{MS14}.

\begin{definition}\label{D:typedefgp}

Un groupe $G$ est \emph{type-définissable} dans $T$ sur un ensemble $A$ de
paramètres si son domaine est  l'intersection d'ensembles
définissables sur $A$ muni d'une loi de groupe relativement 
définissable sur $A$ (Si la théorie $T$ est stable ou supersimple, 
alors $G$ est l'intersection de groupes
définissables sur $A$ \cite[Remark 5.5.1 et Theorem 5.5.4]{Wa00}). Il est 
\emph{connexe} sur $A$ s'il n'a pas de sous-groupes propres 
type-définissables sur $A$ d'indice borné (par rapport à la 
saturation de $K$).

Un élément $g$ de $G$ est générique  sur $A$ si, pour tout $h$ dans $G$ 
avec $g\ind_A h$, alors $$h\cdot g \ind  A\cup\{h\}.$$ 
\end{definition}

Toute extension non-déviante d'un générique l'est aussi. 
Tout élément de $G$ s'écrit comme le produit de deux génériques. En outre, 
le produit de deux génériques indépendants sur $A$ l'est aussi et est 
indépendant sur $A$ de chaque facteur. Si $C$ est un translaté à droite 
d'un sous-groupe $H\leq G$, le tout type-définissable sur $A$, un élément 
$c \in C$ est générique sur $A$, si pour un $x \in C$, le translaté $x 
\cdot c \inv $ est générique dans $H$ au-dessus de $A \cup \{x\}$.

%

\begin{definition}\label{D:Stab}
Dans un groupe  ambiant $G$ type-définissable sur un ensemble 
$A=\alg{\sscl{A}}\cap K$,  le \emph{stabilisateur (à gauche)} $\Stab(g/A)$ d'un élément $g$ 
sur $A$ est un sous-groupe type-définissable sur $A$, qui est 
défini par $\Stab(g/A)=\St(g/A)\cdot \St(g/A)$, avec
$$\St(g/A) = \{h\in
  G\ :\ \exists\,x\models\tp(g/A)\ (\,hx\models\tp(g/A) \ \land
  x\ind_A h\,)\}.$$ 
\end{definition}
\noindent
Tout générique du sous-groupe $\Stab(g/A)$ est contenu dans $\St(g/A)$. Un élément $g$  de $G$ est 
générique sur $A$ si et seulement 
si $\Stab(g/A)$ est d'indice borné. 
 
Le résultat suivant a été démontré par Ziegler \cite[Theorem~1]{mZ90} dans
le cas stable abélien, et généralisé à tout groupe type-définissable dans 
une théorie simple \cite[Lemme 1.2 et Remarque 1.3]{BPW14}. 

\begin{lemma}\label{L:stab} Étant donnés deux éléments 
$a$ et $b$ d'un groupe  $G$ type-définissable dans $T$ tels que $a$, 
$b$ et $a\cdot b$ sont deux-à-deux indépendants au-dessus d'un ensemble
des paramètres $A=\alg{\sscl A}\cap K$, alors $a$ et $a\cdot b$ ont même
stabilisateur sur $A$, commensurable avec un conjugué de 
$\Stab(b/A)$. Ces
stabilisateurs sont connexes sur $A$. De plus,
chaque élément est générique dans le translaté à droite par cet élément de
son stabilisateur respectif. Ce translaté est aussi définissable sur $A$.
\end{lemma}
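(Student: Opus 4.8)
The plan is to reduce everything to the classical Ziegler stabilizer lemma \cite{mZ90} and its generalization in \cite{BPW14}, using the hypothesis that $a$, $b$ and $a\cdot b$ are pairwise independent over $A=\alg{\sscl A}\cap K$. First I would invoke \cite[Lemme 1.2 et Remarque 1.3]{BPW14} directly to obtain that $\Stab(a/A)$ and $\Stab(a\cdot b/A)$ coincide and are commensurable with a conjugate of $\Stab(b/A)$; this is exactly the content of those references once one checks the hypotheses of the cited lemma are met, namely that $T$ is simple (Hypothesis \ref{H:Simple}) and that the three elements are pairwise independent over a base of the form $\alg{\sscl A}\cap K$, which is part of the statement. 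The symmetry built into the pairwise-independence assumption is what allows the same argument applied to $b^{-1}$, $a^{-1}$, and $(a\cdot b)^{-1}$ (or to suitable reorderings) to cover all three stabilizers simultaneously.

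Next I would argue connectedness over $A$. The key point is that $\Stab(g/A) = \St(g/A)\cdot\St(g/A)$ by Definition \ref{D:Stab}, and that every generic of $\Stab(g/A)$ lies in $\St(g/A)$, as recorded right after that definition. Connectedness then follows from the standard fact in simple theories that a stabilizer of this form has no proper type-definable subgroup of bounded index over $A$: any such subgroup would have to contain all the generics, hence all of $\St(g/A)$, hence all of $\Stab(g/A)$. Here one uses that generics of $\Stab(g/A)$ exist and that two generics with the same type over $A$ are conjugate under the connected component — the argument is the usual one, and since all three stabilizers are commensurable and conjugate, connectedness of one gives it for all.

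For the last two assertions — that $g$ is generic in the right translate of its stabilizer by $g$, and that this translate is definable (more precisely, relatively definable, or definable if $T$ is supersimple) over $A$ — I would proceed as follows. By the characterization of generics in a right coset recalled after Definition \ref{D:typedefgp}, $g$ is generic in $\Stab(g/A)\cdot g$ over $A$ precisely when some $x$ in that coset makes $x\cdot g^{-1}$ generic in $\Stab(g/A)$ over $A\cup\{x\}$; taking $x=g'$ a realization of $\tp(g/A)$ independent from $g$ over $A$, one checks using the definition of $\St(g/A)$ that $g'\cdot g^{-1}\in\St(g/A)$ and is generic there, which gives the claim. The definability of the translate over $A$ follows because in a simple theory the right coset $\Stab(g/A)\cdot g$ is the type-definable set cut out by $\tp(g/A)$-conditions together with the stabilizer, and one shows it equals the set of $x$ with $x\equiv_A g$ and $x\cdot g^{-1}\in\Stab(g/A)$ — a relatively $A$-definable condition. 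The main obstacle I expect is not any single step but the careful bookkeeping of which base sets and which independences are needed so that the cited results from \cite{BPW14} apply verbatim; in particular one must make sure the base $A$ is algebraically closed in the sense $A=\alg{\sscl A}\cap K$ throughout, which is why that hypothesis is built into the statement, and that the ambient group $G$ is type-definable over $A$ so that all the stabilizer machinery lives over $A$.
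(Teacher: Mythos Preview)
Your proposal is essentially aligned with the paper: the paper does not give an independent proof of this lemma at all, but simply records it as a known result, citing \cite[Theorem~1]{mZ90} for the stable abelian case and \cite[Lemme 1.2 et Remarque 1.3]{BPW14} for the general simple case. Your plan to invoke precisely these references is therefore exactly what the paper does.

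The extra justifications you add go beyond what the paper provides. Most of them are fine in spirit, but your sketch of connectedness is not quite right as written: from ``every generic of $\Stab(g/A)$ lies in $\St(g/A)$'' it does not follow that an arbitrary type-definable subgroup of bounded index must contain \emph{all} generics of $\Stab(g/A)$; in a simple theory different generic types can live in different cosets. Connectedness here really is part of what is established in \cite{BPW14} via the specific pairwise-independence configuration, not a formal consequence of the shape of Definition~\ref{D:Stab} alone. Likewise, in your argument that $g$ is generic in $\Stab(g/A)\cdot g$, checking that $g'\cdot g^{-1}$ is generic in $\Stab(g/A)$ over $A\cup\{g\}$ (and in particular that $g\ind_A g'\cdot g^{-1}$) requires the configuration-specific reasoning of the cited lemma rather than just $g'\ind_A g$. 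None of this is a genuine obstruction, since the cited references cover all of it; just be aware that your supplementary arguments are heuristic and the actual work lives in \cite{BPW14}.
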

\noindent Si $G$ est abélien, alors les trois éléments ont même
stabilisateur sur $A$. 

Dans un groupe algébrique sur un pur corps algébriquement clos, les stabilisateurs obtenus dans le lemme précédent seront des sous-groupes algébriques. 

\begin{cor}\label{C:sousgroupesAdditifs}
Un sous-groupe $H$ de $\bG_a^k(K)$ type-définissable et connexe sur 
$D=\alg{\sscl D} \cap K$  d'indice non-borné est contenu dans un sous-groupe
d'indice non-borné de la forme 
$$\{(x_1,\ldots,x_k) \in \bG_a^k(K) \,|\, S(x_1,\ldots,x_k) =0\},$$
où $S$ est une combinaison $D$-linéaire non triviale en les mots en $x_i$.
\end{cor}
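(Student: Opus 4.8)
The plan is to reduce to the one-dimensional case ($k=1$) and then exploit model-theoretic genericity together with the triangularity of the operators furnished by Proposition~\ref{P:Triang_Charlotte} and Corollary~\ref{C:Trig_Charlotte}. Suppose $H\leq \bG_a^k(K)$ is type-definable and connected over $D=\alg{\sscl D}\cap K$ of unbounded index; we must produce a nontrivial $D$-linear combination $S$ of words in the coordinates $x_i$ vanishing on a supergroup of $H$ of unbounded index. Fix a generic $\bar g=(g_1,\ldots,g_k)$ of $H$ over $D$. Since $H$ has unbounded index, $\bar g$ is \emph{not} generic in $\bG_a^k(K)$ over $D$, i.e.\ the coordinates-with-all-their-operator-words are not algebraically independent over $\sscl D$ in the way a full generic would be. Concretely, by Hypothesis~\ref{H:types} the type of $\bar g$ over $\sscl D$ is controlled by $\alg{\sscl{D(\bar g)}}\cap K$, and the failure of full genericity means there is some nontrivial algebraic relation over $\sscl D$ among the symbols $\{\tht(g_i)\}_{\tht\in\THT,\,i\leq k}$.

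The key step is to \emph{linearize} that relation using the additive structure of $H$. Because $H$ is a subgroup of $\bG_a^k$, for any two independent generics $\bar g,\bar h$ of $H$ over $D$ the sum $\bar g+\bar h$ is again generic, and $\bar g,\bar h,\bar g+\bar h$ are pairwise independent over $D$ (Remark~\ref{R:pairwise} in the additive case, applied to each coordinate). Since every word $\tht$ in $\THT$ is a composition of the additive operators $F_j$, the $\sigma_j\inv$, and $\Frob$, and each such operator is additive, we have $\tht(g_i+h_i)=\tht(g_i)+\tht(h_i)$ for all $\tht$ except that $\Frob$ interacts additively as well in characteristic $p$. Hence the ideal of relations over $\sscl D$ satisfied by $\{\tht(g_i)\}$ is stable under the ``addition'' coming from the group law. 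An algebraic relation $P\bigl(\{\tht(g_i)\}\bigr)=0$ over $\sscl D$ that is preserved under $\bar g\mapsto\bar g+\bar h$ for generic independent $\bar h$ forces, by a standard Vandermonde / polynomial-identity argument (differencing $P$ repeatedly and using that the relevant tuples of operator-values are as independent as the group allows), the existence of a \emph{homogeneous linear} such relation: some $S(x_1,\ldots,x_k)=\sum_{i,\tht}\lambda_{i,\tht}\,\tht(x_i)$ with coefficients $\lambda_{i,\tht}\in\alg{\sscl D}\cap K=D$, not all zero, with $S(\bar g)=0$. This is exactly where triangularity enters: Corollary~\ref{C:Trig_Charlotte} guarantees that the leading term of $S$ under the lexicographic order on $\THT$ does not get cancelled by lower-order interactions when we scale or translate, so the differencing procedure genuinely lowers the degree and terminates at a linear relation rather than collapsing to $0$.

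It then remains to check that the definable group $H_S:=\{\bar x\in\bG_a^k(K)\mid S(\bar x)=0\}$, which is visibly a subgroup (as $S$ is additive) definable over $D$ and contains $\bar g$, actually contains \emph{all} of $H$ and has unbounded index. Containment of $H$ follows because $\bar g$ was an arbitrary generic of the connected group $H$ over $D$: the generics are Zariski-dense (indeed type-dense) in $H$, $H_S$ is relatively definable over $D$, and a connected type-definable group with a generic inside a definable subgroup lies inside that subgroup. Unboundedness of the index of $H_S$ in $\bG_a^k$ is immediate since $S$ is a nontrivial additive polynomial: its zero set is a proper subgroup cut out by one nontrivial equation in the infinitely many independent ``coordinates'' $\tht(x_i)$, so $\bG_a^k/H_S$ is infinite, hence of unbounded index in a saturated model. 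The main obstacle is the linearization step: extracting a genuinely nonzero \emph{linear} relation from an arbitrary algebraic one, controlling cross-terms between $\Frob$ and the additive operators in characteristic $p$, and verifying that repeated differencing by independent generics strictly decreases the $\THT$-degree of the leading monomial — this is precisely what the triangular normal form of Proposition~\ref{P:Triang_Charlotte} and the leading-coefficient computation of Corollary~\ref{C:Trig_Charlotte} are designed to make work, echoing Ziegler's original argument for pure algebraically closed fields.
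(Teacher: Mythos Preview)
Your overall strategy is sound, but the route you take through the linearization step differs from the paper's, one key step is left as a sketch, and you invoke machinery that is not actually used here.

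The paper does not linearize by differencing. Having fixed two independent generics $a,b$ of $H$ over $D$ and a finite tuple of words $\bar\tht$ witnessing a nontrivial algebraic dependence of $\bar\tht(a)$ over $\alg D$ (not imposed by the theory), it applies Lemma~\ref{L:stab} \emph{inside the pure algebraically closed field} $\alg K$ to the three pairwise field-independent tuples $\bar\tht(a),\bar\tht(b),\bar\tht(a+b)$ over $\alg D$. Their common stabilizer $H_1$ is then a proper connected \emph{algebraic} subgroup of the relevant power of $\bG_a$, defined over $\alg D$, with the coset $H_1+\bar\tht(a)$ also $\alg D$-definable. The classical structure of algebraic subgroups of vector groups (they are cut out by additive polynomials) immediately yields a nontrivial $\alg D$-linear combination $S'$ of words with $S'(a)=d'\in\alg D$; when $K$ is only separably closed one composes with a high enough power of Frobenius to bring both coefficients and constant term down into $D$. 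Finally, choosing $a'\equiv_D a$ independent of $a$, the difference $a-a'$ is a generic of $H$ satisfying $S(x)=0$, and connectedness of $H$ gives the containment.

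Your iterated-differencing sketch is in effect trying to reprove that structural fact about algebraic subgroups of $\bG_a^N$ by hand, and as written it is incomplete: you do not explain why the coefficients of the resulting relation stay in $D$ rather than drifting into $D(\bar h)$ after differencing by a second generic, nor how the procedure terminates at a nonzero additive relation in characteristic $p$ (where $p$-th powers are additive and do not drop degree under differencing---they must instead be recognised as words via $\Frob$). More importantly, your appeal to Proposition~\ref{P:Triang_Charlotte} and Corollary~\ref{C:Trig_Charlotte} is misplaced: the paper's proof of this corollary uses neither, and triangularity plays no role here---it enters only later, in the multiplicative part of the proof of Theorem~\ref{T:noborne}. (The announced reduction to $k=1$ is also never carried out and is not needed.)
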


Même si les opérateurs ne sont pas nécessairement libres, la relation 
$S=0$ du corollaire précédent ne peut être imposée par la théorie.

\noindent Rappelons que le Frobenius apparaît dans les mots en $x_i$ en caractéristique positive. 

\begin{proof}
Soit $a$ et $b$ deux génériques de $H$ indépendants sur $D$. Ainsi $a$, 
$b$ et $a+b$ sont deux-à-deux indépendants sur
$A$~:  les structures 
$\sscl{D(a)}$, $\sscl{D(b)}$ et $\sscl{D(a+b)}$ sont deux-à-deux
algébriquement indépendantes sur $D$. Puisque $H$ n'est pas d'indice borné,
l'uple $a$ n'est pas un générique de $\bG_a^k(K)$. Par les propriétés sur les génériques et l'indépendance, il existe une suite finie de mots $\bar\tht$, commune aux uples $a$, $b$ et $a+b$, qui témoigne 
une relation d'algébricité au-dessus de  $\alg{D}$ non-imposée par la théorie $T$. Dans le pur corps algébriquement
clos $\alg{K}$, le lemme \ref{L:stab} appliqué aux uples $\bar\tht(a)$, $\bar\tht(b)$ et $\bar\tht(a+b)$ au-dessus de  $\alg{D}$  entraîne que  leur stabilisateur $H_1$ est un 
sous-groupe algébrique défini sur $\alg{D}$ de la puissance correspondante  du groupe additif
$\bG_a$. Le translaté $H_1 + \bar\tht(a)$ est également défini sur $\alg{D}$. Le sous-groupe $H_1$ est propre car $\bar\tht(a)$ n'est pas algébriquement indépendant, et donné par des polynômes additifs. Ainsi, il existe une combinaison $\alg{D}$-linéaire $S'$ non triviale en les mots en $x_i$ et un élément $d' \in \alg{D} $ tel que $S'(a)=d'$. Si $K$ n'est pas algébriquement clos, il suffit de composer par une puissance suffisante du Frobenius pour obtenir une combinaison $D$-linéaire $S$ et $d \in D$ tel que $S(a) =d$. 

Si $a'\equiv_D a$ est indépendant de $a$ sur $D$, alors $a-a'$ est un 
générique de $H$ sur $D$ satisfaisant l'équation $S(x_1,\ldots,x_k) =0$, ce 
qui donne le résultat par connexité de $H$. 
\end{proof}

\begin{remark}\label{R:Ssgps_add_SCF}
Notons que si le corps muni d'opérateurs n'était pas séparablement clos, par exemple, les corps 
pseudo-finis, nous aurions besoin d'une description (à indice fini prés) des sous-groupes additifs 
définissables. 
\end{remark}

\begin{definition}\label{D:cl}
Pour un type générique fixé $p$ de $K$ sur $\emptyset$, on définit la
$p$-clôture $\cl_p(D)$ d'un ensemble $D$ comme la collection
des éléments $x$ de $K$ \emph{co-étrangers} sur $D$ à $p$, c'est-à-dire 
tels que, pour tout $D_1 \supset D$ et toute réalisation $a$ de $p$, 
générique sur $D_1$, on ait $a \ind_{D_1} x$. 
\end{definition}

Cette définition correspond à la définition usuelle \cite[Definition 
3.5.1]{Wa00}, par \cite[Remark 5.1.19]{Wa00}.

\begin{remark}\label{R:cl}
La cardinalité de $\cl_p(D)$ peut être comparable à celle de $K$, même 
si $D$ est fini. Par exemple, dans le cas des corps différentiels avec $n$ 
dérivations qui commutent, comme tout générique a rang $\omega^n$ 
\cite[Corollary 5.2.8]{McG00}, cette clôture contient tous les éléments 
non-génériques et, en particulier, les constantes. 

Cette clôture ne dépend pas du générique $p$ choisi. En effet~: soient 
$q$ un générique et $x$ un élément de $\cl_q(D)$. Considérons $D_1 \supset 
D$  et une réalisation $a$ de $p$, générique sur $D_1$. Soit $b$ une 
réalisation de $q$ générique sur $D_1 \cup \{a\}$. Alors $b$ est encore 
générique sur $D_1 \cup \{b\inv\cdot a\}$, donc
$$b \ind_{D_1 \cup \{b\inv\cdot a\}} x,$$
ce qui entraîne 
$$a \ind_{D_1 \cup \{b\inv\cdot a\}} x \text{ et  par transitivité }
a \ind_{D_1} x.$$
En particulier, si $x$ est dans la clôture de $D$, il ne peut être
générique sur $D$. La réciproque est vraie pour les purs corps
algébriquement clos, les corps aux différences génériques et les corps
différentiels aux différences \cite[Corollary 2.9]{rB11}. Ainsi, cette
clôture correspond à la clôture algébrique pour les purs corps
algébriquement clos et, dans le cas des corps différentiellement clos
ou aux différences génériques, à la collection des éléments
fini-dimensionnels au-dessus de $D$. \cite[Definition 6.1]{MS14} 

Par la suite, la  clôture de $D$ sera notée simplement $\cl(D)$.
\end{remark}

Dans la remarque précédente, on a noté que la clôture de $D$
correspondait à la collection de non-génériques sur $D$ si la théorie
a rang de Lascar $\omega^\alpha$, pour un certain ordinal $\alpha$.
 Nous remercions Z. Chatzidakis de nous avoir fourni les pistes nécessaires
 pour observer que ce n'est pas le cas pour les corps munis d'opérateurs 
libres.

\begin{remark}
Soit un corps algébriquement clos en caractéristique nulle
existentiellement clos suffisamment saturé muni de deux automorphismes
$\sigma_1$ et $\sigma_2$ qui ne commutent pas. 

Par \cite[Theorem 4.6]{MS14}, il existe un élément $b$ tel que les
relations algébriques entre les mots en $b$ sont uniquement celles induites
par l'équation $\sigma_1(b)=b$. On vérifie facilement que
$a=b+\sigma_2(b)$ est générique mais $a\nind b$, ce qui donne un exemple 
d'élément non-générique qui n'appartient pas à la clôture de $\F$. 

En revanche,  la clôture $\cl(D)$ n'est pas réduite à $\alg{\sscl
D}$~: considérons  $D=\alg{\sscl
D}$ et $x$ un élément.  Modulo les identifications $\sigma_i\inv \sigma_i 
= \sigma_i \sigma_i \inv = \mathrm{Id}$, notons  $\THT_r$ l'ensemble des 
mots en $\sigma_1$, $\sigma_2$, $\sigma_1\inv$, $\sigma_2\inv$ de longueur 
au plus $r$ (Le mot vide de longueur $0$ correspond à l'identité). 

Posons $$f_x(r)=\mathrm{deg.tr}
(\THT_r(x)/D),$$
où $\THT_r(x)$ est l'uple constitué des éléments $\theta(x)$ pour $\theta 
\in \THT_r$. 

On a $|\THT_1 \setminus \THT_0| =4$ et $|\THT_{r+2} \setminus \THT_{r+1}| 
= 3 \times |\THT_{r+1} \setminus \THT_{r}|$. Ainsi, pour  $a$ générique sur 
$D$, 
$$\mathrm{deg.tr}(\THT_{r+1}(a)/D(\THT_{r}(a))) = f_a(r+1) - f_a(r) = 
4\times 3^r.$$  Si $a\nind_D x$, alors il existe des entiers $r_0$ et 
$s_0$ tels que $$\mathrm{deg.tr}
(\THT_{r_0+1}(a)/D(\THT_{r_0}(a),\THT_{s_0}(x))) \leq 4\times 3^{r_0}
-1.$$
Puisque pour tous entiers $r$ et $s$, 
$$\mathrm{deg.tr}
(\THT_{r+2}(a)/D(\THT_{r+1}(a), \THT_{s+1}(x))) \leq 3
\mathrm{deg.tr}
(\THT_{r+1}(a)/D(\THT_{r}(a), \THT_{s}(x))), $$
on obtient pour tout $m$ l'inégalité
$$\begin{array}{rcl}
4\times 3^{r_0+m} - f_x(s_0+m) &\leq &\mathrm{deg.tr}
(\THT_{r_0+m+1}(a)/D(\THT_{r_0+m}(a),\THT_{s_0+m}(x)))\\[1mm]
&\leq& 3^m (4\times 3^{r_0} -1),
\end{array}
$$ donc 
$$f_x(s_0+m)) \geq 3^m.$$
Ainsi la clôture $\cl(D)$ contient tout élément $x$ vérifiant 
$f_x(r)=o(3^r)$. Or, par compacité, il existe  un élément $c$ tel que les 
seules relations algébriques entre les mots en $c$ sont  celles induites 
par les équations $\sigma_1(\sigma_2^l(c))=\sigma_2^l(c)$ pour $l \in \Z$. 
Cet élément 
$c$ n'est pas algébrique mais $f_c(r) = 2r+1$, donc $c \in \cl(D)$.

On peut vérifier de manière analogue que pour tout corps muni d'au moins 
deux opérateurs libres, la clôture ne contient pas tous les non-génériques 
et n'est pas réduite à la clôture algébrique. 
\end{remark}

Le caractère fini de l'indépendance et la transitivité donnent
le résultat suivant.

\begin{cor}\label{C:indi}
Pour tout générique $a$ sur $D$, $$ a\ind_D \cl(D).$$
\end{cor}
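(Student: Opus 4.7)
Le plan est d'exploiter le caract\`ere fini de l'ind\'ependance pour se ramener \`a un uple fini d'\'el\'ements de $\cl(D)$, puis de proc\'eder par r\'ecurrence sur la longueur de cet uple.

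En effet, par caract\`ere fini de $\ind$, il suffit d'\'etablir $a\ind_D (x_1,\ldots,x_n)$ pour tout $n\geq 1$ et tout uple fini $(x_1,\ldots,x_n)\in\cl(D)^n$. Le cas $n=1$ est imm\'ediat~: en appliquant la d\'efinition \ref{D:cl} avec $D_1=D$, puisque $a$ r\'ealise le g\'en\'erique $p$ et est g\'en\'erique sur $D$, et puisque $x_1\in\cl(D)$, on obtient directement $a\ind_D x_1$.

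Pour le pas inductif, supposons $a\ind_D (x_1,\ldots,x_n)$ et posons $D_1=D\cup\{x_1,\ldots,x_n\}$. La g\'en\'ericit\'e de $a$ sur $D$ signifie que $\tp(a/D)$ est une extension non-d\'eviante du type g\'en\'erique fix\'e $p$. Combin\'ee avec l'hypoth\`ese de r\'ecurrence $a\ind_D D_1$, la transitivit\'e de $\ind$ donne que $\tp(a/D_1)$ est encore une extension non-d\'eviante de $p$, c'est-\`a-dire que $a$ reste g\'en\'erique sur $D_1$. Puisque $x_{n+1}\in\cl(D)$ et $D_1\supset D$, la d\'efinition \ref{D:cl} fournit alors $a\ind_{D_1}x_{n+1}$. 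Une ultime application de la transitivit\'e, \`a $a\ind_D D_1$ et $a\ind_{D_1}x_{n+1}$, donne $a\ind_D (x_1,\ldots,x_{n+1})$, ce qui ach\`eve la r\'ecurrence.

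L'obstacle principal est minime~: comme annonc\'e par les auteurs juste avant l'\'enonc\'e, tout d\'ecoule essentiellement du caract\`ere fini et de la transitivit\'e. Le seul point requ\'erant attention est la pr\'eservation de la \emph{g\'en\'ericit\'e} de $a$ (i.e.\ le fait que $\tp(a/\cdot)$ demeure une extension non-d\'eviante de $p$) lorsqu'on enrichit les param\`etres de $D$ en $D_1$, ce qui est pr\'ecis\'ement garanti par la transitivit\'e de $\ind$ appliqu\'ee \`a l'hypoth\`ese de r\'ecurrence.
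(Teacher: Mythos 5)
Your proof is correct and follows exactly the route the paper indicates (finite character plus transitivity, together with the fact stated after D\'efinition~\ref{D:typedefgp} that a non-forking extension of a generic is generic). The only tiny imprecision is the phrase ``$a$ r\'ealise le g\'en\'erique $p$''~: a priori $a$ generic over $D$ only realises \emph{some} generic type over $\emptyset$, but the independence of $\cl_p$ from the choice of $p$ (Remarque~\ref{R:cl}) makes this harmless.
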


Comme aucun générique sur un ensemble ne peut être dans la clôture 
algébrique (au sens modèle-théorique) de ce 
dernier, on a également la propriété suivante.

\begin{remark}\label{R:indi} 
Si $b$ est un générique sur $D$ qui appartient  à $\cl(D,a)$, alors 
$ b\nind_D a$.
\end{remark}

La remarque suivante est une adaptation directe de \cite[Lemma 
3.5.5]{Wa00}.

\begin{remark}\label{R:etranger} 
Si $c$ appartient à $\cl(B)$ et $c\ind_A B$ avec
$A\subset B$, alors $c$ appartient à $\cl(A)$. 
\end{remark}
\begin{proof}
Soient $D\supset A$ et un générique $b$ sur $D$. Pour montrer que $b\ind_D 
c$, on peut supposer, par l'hypothèse $(\ref{H:Simple}) (1)$, que 
$Db\ind_{Ac} B$. Puisque $c\ind_A B$, alors 
$$ Dbc\ind_A B,$$
donc $b\ind_D D\cup B$ et $b$ est un générique sur $D\cup B$. Puisque 
$c\in\cl(B)$, on a 

$$ b\ind_{D\cup B} c,$$
ce qui entraîne $b\ind_D c$, par transitivité. 
\end{proof}

\begin{definition}\label{D:borne}
Un automorphisme $\tau$ de la $\LL$-structure $K$ est \emph{borné} 
s'il existe un ensemble $D$ fini tel que, pour tout  générique  $a$ sur 
$D$, l'élément $\tau(a)$ appartient à $\cl(D,a)$.
\end{definition}

Par la remarque \ref{R:cl}, on retrouve ainsi la définition originale de 
Lascar pour un pur corps algébriquement clos, ainsi que celle de Konnerth 
pour les corps différentiellement clos. 

\begin{lemma}\label{L:independanceSurCloture}
Soit $\tau$ un automorphisme borné sur un ensemble fini de paramètres $D$,
et $a$ et $b$ deux éléments génériques et indépendants sur $D$. Si
l'on pose $$D_0 = \cl(D) \cap \alg{\sscl {D(a, b,
\tau(a),\tau(b))}},$$ alors les paires 
$(a,\tau(a))$, $(b,\tau(b))$ et $(ab,\tau(ab))$ (respectivement, les
paires $(a,\tau(a))$, $(b,\tau(b))$ et
$(a+b,\tau(a+b))$) sont
deux-à-deux
indépendantes sur $D_0$. 
\end{lemma}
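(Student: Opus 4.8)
The goal is to show that if $\tau$ is bounded on $D$ and $a,b$ are generic and independent over $D$, then working over $D_0 = \cl(D) \cap \alg{\sscl{D(a,b,\tau(a),\tau(b))}}$, the three pairs $(a,\tau(a))$, $(b,\tau(b))$, $(ab,\tau(ab))$ are pairwise independent (and likewise in the additive case). By symmetry of $\ind$, it suffices to check three independences; and by symmetry of the roles of $a$ and $b$, really only two genuine cases arise: $(a,\tau(a)) \ind_{D_0} (b,\tau(b))$, and $(a,\tau(a)) \ind_{D_0} (ab,\tau(ab))$.

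\medskip

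\textbf{First step: reduce to independence over $D$ of the relevant tuples.} Since $D \subset D_0 \subset \cl(D)$, and since by Corollary \ref{C:indi} any generic over $D$ is independent from $\cl(D)$ over $D$, the key observation is a base-change: if $X \ind_D \cl(D)$ and $X \ind_D Y$ for tuples $X,Y$ that together generate (field-theoretically, up to algebraic closure) a structure lying inside $\alg{\sscl{D(a,b,\tau(a),\tau(b))}}$, one wants to transfer $X \ind_D Y$ to $X \ind_{D_0} Y$. The clean way is: $a \ind_D \cl(D)$ and $b\ind_D\cl(D)$ and (by Remark \ref{R:pairwise}, applied to $ab$ and $a+b$) also $ab \ind_D \cl(D)$, $a+b\ind_D\cl(D)$; moreover $\tau(a) \in \cl(D,a)$ so $(a,\tau(a))$ generates inside $\cl(D,a)$ a structure still independent from $\cl(D)$ over $D(a)$-ish data. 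Concretely I would argue that $D_0 \subseteq \cl(D)$ is algebraic over $D$ together with the pair-data, and use transitivity of linear disjointness: the extensions $\alg{\sscl{D(a,\tau(a))}}$ and $\alg{\sscl{D(b,\tau(b))}}$ are linearly disjoint over $\alg{\sscl D}$ (this is the content of $(a,\tau(a)) \ind_D (b,\tau(b))$, which must first be established), and both $D_0$-extensions sit inside these, so one upgrades the base from $\alg{\sscl D}$ to $\alg{\sscl{D_0}}$ provided $D_0$ is exactly the intersection of $\cl(D)$ with the compositum — which is how $D_0$ is defined.

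\medskip

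\textbf{Second step: establish the pairwise independences over $D$ itself.} We know $a \ind_D b$. We need $(a,\tau(a)) \ind_D (b,\tau(b))$, i.e.\ $a\tau(a)$-data independent from $b\tau(b)$-data over $D$. Since $\tau(a) \in \cl(D,a)$ and $\tau(b) \in \cl(D,b)$: set $A = D \cup \{a\}$, $B = D\cup\{a,b\}$; then $\tau(b) \in \cl(D,b) \subseteq \cl(B)$, and $\tau(b) \ind_{A} B$? — not obviously. Better: use Remark \ref{R:etranger}. We have $\tau(b) \in \cl(D,b)$. I want to conclude $\tau(a)$ and the block $(b,\tau(b))$ are independent over $D(a)$ — equivalently the whole block $(a,\tau(a))$ is independent over $D$ from $(b,\tau(b))$. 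Applying Remark \ref{R:etranger} with $A = D(b)$-type data and $B = D(b,a,\tau(a))$: since $\tau(b) \in \cl(D,b) \subseteq \cl(B)$, IF $\tau(b) \ind_{D(b)} B$ then $\tau(b) \in \cl(D,b)$ already, which is automatic — so the useful direction is: from $a \ind_D b$ and the facts that $\tau(a)$ is co-foreign over $D(a)$, $\tau(b)$ co-foreign over $D(b)$, deduce $(a,\tau(a)) \ind_D (b,\tau(b))$ by a symmetric co-foreignness/transitivity argument. For the $ab$ case, by Remark \ref{R:pairwise} we have $a \ind_D ab$, and $\tau(ab) = \tau(a)\tau(b) \in \cl(D,a,b) = \cl(D,a,ab)$ since $\tau$ is a field automorphism; running the same argument gives $(a,\tau(a)) \ind_D (ab,\tau(ab))$.

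\medskip

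\textbf{The main obstacle.} The delicate point is the interplay between the model-theoretic independence and the enlargement of the base to $D_0$: one must be sure that nothing in the generated structure $\alg{\sscl{D(a,b,\tau(a),\tau(b))}}$ forces a hidden dependence among the three pairs \emph{except} through elements of $\cl(D)$, and that all such elements have been swept into $D_0$. The proof hinges on the precise definition of $D_0$ as $\cl(D) \cap \alg{\sscl{D(a,b,\tau(a),\tau(b))}}$ together with the finite-character and transitivity of $\ind$, plus the base-monotonicity hidden in the linear-disjointness formulation: concretely, if $X \ind_D Y$ and $D \subseteq D_0 \subseteq \acl(D \cup XY)$ with $D_0 \subseteq \cl(D)$ and $X, Y$ each independent from $\cl(D)$ over $D$, then $X \ind_{D_0} Y$. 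Verifying this base-change lemma carefully — essentially an application of hypothesis \ref{H:Simple} and Corollary \ref{C:indi} — is where the real work lies; the rest is bookkeeping with Remarks \ref{R:pairwise} and \ref{R:etranger}.
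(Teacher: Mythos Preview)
Your plan has a genuine gap: neither of the two steps you outline can be carried out with the tools you invoke, and the essential ingredient --- Hypoth\`ese~\ref{H:Forking} --- is never used.

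\medskip

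\textbf{Step 2 (independence over $D$).} You want $(a,\tau(a)) \ind_D (b,\tau(b))$. After the easy reductions you sketch, this boils down to $\tau(a) \ind_{D,a,b} \tau(b)$. But both $\tau(a)$ and $\tau(b)$ lie in $\cl(D,a,b)$, and nothing in the definition of $\cl$ forces two elements of the closure of a set to be independent over that set. Your ``symmetric co-foreignness/transitivity argument'' is never made precise, and I do not believe it can be: indeed $(a,\tau(a)) \ind_D (b,\tau(b))$ may simply fail. The whole point of introducing $D_0$ is to absorb into the base those elements of $\cl(D)$ that become algebraic over the data $a,b,\tau(a),\tau(b)$; over the bare $D$ there is no reason for the pairs to be independent.

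\textbf{Step 1 (base change).} Your base-change lemma needs $(a,\tau(a)) \ind_D \cl(D)$ and $(b,\tau(b)) \ind_D \cl(D)$. Corollary~\ref{C:indi} gives $a \ind_D \cl(D)$, but says nothing about $\tau(a)$: there is no reason a non-generic element of $\cl(D,a)$ should be independent from $\cl(D)$ over $D,a$. So the hypothesis of your lemma is itself unjustified.

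\medskip

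The paper proceeds quite differently. It never attempts to work over $D$. It first proves only the weak statement $a \ind_{D_0} b,\tau(b)$, which \emph{does} follow from Corollary~\ref{C:indi} since $D_0 \cup \{b,\tau(b)\} \subset \cl(D,b)$ and $a$ is generic over $D,b$. Then it takes a Morley sequence $\{(a_i,c_i)\}_i$ of $\tp(a,\tau(a)/D_0,b,\tau(b))$ with $\{a_i\}_i \ind_{D_0} b,\tau(b)$, applies Hypoth\`ese~\ref{H:Forking} to get $(a,\tau(a)) \ind_X D_0,b,\tau(b)$ for $X = \alg{\sscl{D_0(b,\tau(b))}} \cap \alg{\sscl{\{a_i,c_i\}_i}} \cap K$, and finally uses Remark~\ref{R:etranger} (each $c_i \in \cl(D_0,\{a_j\}_j)$ and $\{a_j\}_j \ind_{D_0} X$) to conclude $X \subset \cl(D_0) \cap \alg{\sscl{D_0(b,\tau(b))}} = D_0$. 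The remark following the lemma in the paper confirms that bypassing Hypoth\`ese~\ref{H:Forking} requires hyperimaginaries; it cannot be done with Hypoth\`ese~\ref{H:Simple} and Corollary~\ref{C:indi} alone, as you suggest.
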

Par le corollaire \ref{C:indi}, l'élément $a$ reste indépendant sur 
$D_0$. De même pour $\tau(a)$, s'il était générique sur $D$. 
\begin{proof} On peut supposer par la suite que $D=\alg{\sscl D}\cap K$. Le 
corollaire $\ref{C:indi}$ et la transitivité de l'indépendance donnent que 

$$a \ind_D \cl(D,b).$$

\noindent Comme $D_0\cup\{b,\tau(b)\}\subset \cl(D,b)$, on en déduit que 
$a \ind_{D_0} b, \tau(b)$.

Soit $\{a_i,c_i\}_{i\in\N}$ une suite de Morley du type 
$\tp(a,\tau(a)/D_0,b,\tau(b))$, qui peut être prise telle que 

$$\{a_i\}_{i\in\N}\ind_{D_0} b,\tau(b),$$

\noindent par l'indépendance précédente. Pour
$X=\alg{\sscl{D_0(b,\tau(b))}}\cap\alg{\sscl {\{a_i,c_i\}_{i\in\N}}}\cap K$,
 l'hypothèse $(\ref{H:Forking})$ entraîne que 
 $$ a,\tau(a)\ind_X D_0,b,\tau(b).$$ 

 \noindent Notons de plus que 

$$ \{a_i\}_{i\in\N}\ind_{D_0} X.$$

Ainsi, comme chaque $c_i\in\cl(D_0,\{a_i\}_{i\in\N})$, on a 
$X\subset \cl(D_0)$ par  la 
 remarque \ref{R:etranger}, donc $X\subset \cl(D_0)\cap 
\alg{\sscl{D_0(b,\tau(b))}}=D_0$ et, en particulier,

$$ a,\tau(a)\ind_{D_0} b,\tau(b).$$

Du fait que $ab$ soit également indépendant de $a$
(et de $b$, respectivement) sur $D$, par la remarque \ref{R:pairwise},
on conclut par symétrie. Le cas additif est traité de la même façon.
\end{proof}

\begin{remark}
Notons que le lemme reste vrai, sans utiliser l'hypothèse 
$(\ref{H:Forking})$, en considérant des hyperimaginaires (\emph{cf.} 
\cite[Chapter 3]{Wa00}).
\end{remark}

\section{Pas d'automorphismes bornés}

Nous disposons maintenant de tous les ingrédients nécessaires pour 
caractériser les automorphismes bornés. Cette démonstration est inspirée de 
celle de \cite[Lemma 3.11]{HO14}.

\begin{theorem}\label{T:noborne}(\cite[Théorème 15]{dL92} et
\cite[Proposition 2.9]{rK02})
 
Sur un corps $\F$ de base, soit un corps  
$(K,0,1,+,-,\cdot,F_1,\ldots,F_n)$ muni d'opérateurs satisfaisant 
les conditions suivantes~:
\begin{itemize}
\item[{\bf Hypothèse \ref{H:Residue}}] Tous les corps résiduels de la
$\F$-algèbre associée sont $\F$.
\item[{\bf Hypothèse \ref{H:Endo}}] Les endomorphismes associés
$\{\si_1,\ldots,\si_t\}$ sont surjectifs.
\item[{\bf Hypothèse \ref{H:AlgebriquementClosPlusFrob}}] Le corps $K$ est 
séparablement clos suffisamment saturé pour sa théorie $T$.
\item[{\bf Hypothèses \ref{H:types} et \ref{H:Simple}}] La théorie $T$ est 
simple et vérifie~:

\[ A\ind_C B \Longleftrightarrow \begin{minipage}{8cm}
                                  $\alg{\sscl {A\cup C} }$
et $\alg{\sscl {B\cup C} }$ sont
linéairement 
disjointes sur $\alg{\sscl C}$.
                                 \end{minipage}\]
 
Le type d'un uple $a$ sur un sous-corps $k=
\sscl k$ est déterminé par la classe de $\LL$-isomorphisme de  
$\alg{ \sscl{k(a)}} \cap K$. De plus, la clôture algébrique modèle-théorique 
d'un ensemble $A$ est egale à $\alg{\sscl A}\cap K$. 
\item[{\bf Hypothèse \ref{H:Forking}}] Si $B=\alg{\sscl B}\cap K$ et 
$\{a_i\}_{i<\omega}$ est une suite de Morley
 du type $\tp(a/B)$, alors
\[a\ind_{B\cap\alg{\sscl{\{a_i\}_{i<\omega}}}} B.\]

\end{itemize}
Alors, tout automorphisme borné  de $K$ est égal à la composition d'automorphismes associés et de leurs inverses,    ainsi que du Frobenius et de son inverse si $K$ est un corps algébriquement clos de caractéristique positive.
\end{theorem}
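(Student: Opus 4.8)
Fix a bounded automorphism $\tau$, witnessed by a finite set $D$ which, enlarging it, we may take to satisfy $D=\alg{\sscl D}\cap K$. The strategy is to show that some composition $\tau'$ of $\tau$ with (inverses of) the associated endomorphisms and with powers of $\Frob$ fixes every generic, hence is the identity on a dense set of generics, and finally is the identity on $K$. More precisely, I would pick two generics $a$ and $b$, independent over $D$, and apply Lemma \ref{L:independanceSurCloture} to get that the pairs $(a,\tau(a))$, $(b,\tau(b))$ and $(ab,\tau(ab))$ (and likewise with $a+b$) are pairwise independent over $D_0=\cl(D)\cap\alg{\sscl{D(a,b,\tau(a),\tau(b))}}$. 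Working inside the ambient algebraic group $\bG_a^k\times\bG_a^k$ (for the additive structure) we are in the situation of Lemma \ref{L:stab}: the element $(a,\tau(a))$ lies, generically, in a coset of a connected type-definable subgroup $H\leq\bG_a^k\times\bG_a^k$ over $D_0$, and since $\tau(a)\in\cl(D,a)$ while $a$ is generic, $H$ projects onto the first factor with non-bounded ``vertical'' part controlled by $\cl(D_0)$-algebraicity; the graph of $\tau$ restricted to generics is thus cut out, modulo $\cl(D_0)$, by a subgroup of the form described in Corollary \ref{C:sousgroupesAdditifs}.

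Concretely, applying Corollary \ref{C:sousgroupesAdditifs} to $H$ yields a nontrivial $D_0$-linear combination $S$ of words such that, for $a$ generic over $D$, the element $S(a,\tau(a))$ is algebraic over $D_0$; writing $S(x,y)=S_1(x)+S_2(y)$ with $S_1,S_2$ $\cl(D_0)$-linear in the words, and using that $a$ is generic (so no nontrivial word-relation on $a$ over $\cl(D_0)$ is imposed, by the remark following Corollary \ref{C:sousgroupesAdditifs}) together with the fact that $\tau$ is a $\LL$-automorphism commuting with all the operators, one reads off that $S_2\circ\tau$ and $-S_1$ must have the same dominant word. Here is where Corollary \ref{C:Trig_Charlotte} enters: comparing dominant words and dominant coefficients of $S_1(x)$ and $S_2(\tau(x))$ forces the leading word $\tht$ of $S_2$, after twisting by $\tau$, to match the leading word of $S_1$; since $\tau$ permutes generics and the words $\THT$ are linearly ordered, an induction peeling off leading terms shows $\tau$ agrees on generics with $\si_{\tht}\inv$ for a suitable $\tht\in\THT$, i.e.\ with a composition of associated automorphisms and a power of $\Frob$.

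Replacing $\tau$ by $\tau'=\si_{\tht}\circ\tau$ we may thus assume $\tau'$ fixes every generic over (an enlarged) $D$. It remains to conclude $\tau'=\mathrm{Id}$. Given any $x\in K$, choose by Hypothesis \ref{H:Simple}(1) a generic $a$ over $D\cup\{x\}$; then $a$ and $a+x$ are both generic over $D$ (Remark \ref{R:pairwise}), so $\tau'(a)=a$ and $\tau'(a+x)=a+x$, whence $\tau'(x)=x$. Thus $\tau'$ is the identity, giving the stated decomposition of $\tau$. I expect the main obstacle to be the bookkeeping in the second paragraph: carefully extracting from the subgroup $H$ of Corollary \ref{C:sousgroupesAdditifs} a \emph{functional} relation $\tau(a)=\si_\tht\inv(a)$ rather than merely a linear constraint, handling the $\cl(D_0)$-coefficients (which need not be algebraic over $D$), and controlling the Frobenius twist in positive characteristic so that one lands on an \emph{honest} composition of the associated automorphisms and integer powers of $\Frob$ — this is exactly the point where triangularity (Proposition \ref{P:Triang_Charlotte}) and the leading-term calculus of Corollary \ref{C:Trig_Charlotte} do the real work.
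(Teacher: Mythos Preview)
Your overall architecture is right --- set up $D_0$ via Lemma~\ref{L:independanceSurCloture}, apply Lemma~\ref{L:stab} additively, use Corollary~\ref{C:sousgroupesAdditifs} to get a nontrivial linear relation $S_1(a)+S_2(\tau(a))=\xi$ over $D_0$, and at the end push from generics to all of $K$ --- but the core of your second paragraph does not go through.

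The gap is the claim that from $S_1(a)+S_2(\tau(a))=\xi$ and the fact that $\tau$ commutes with the operators one can ``peel off leading terms'' and read off $\tau(a)=\si_\tht\inv(a)$. Commutation only gives $S_2(\tau(a))=\tau(\tilde S_2(a))$ with $\tilde S_2$ having $\tau\inv$-twisted coefficients; you are then comparing an element of $\sscl{D_0(a)}$ with $\tau$ applied to another such element, and there is no meaningful notion of ``same dominant word'' across that. Corollary~\ref{C:Trig_Charlotte} says nothing about $S(\tau(x))$: it computes the leading term of $S(gx)$ under \emph{multiplication} by $g$, and that is precisely how the paper uses it. A single additive constraint cannot by itself pin $\tau$ down to a specific endomorphism (already in the pure ACF case it only gives $\tau(a)=\alpha a+\beta$).

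What you are missing is a second application of Lemma~\ref{L:stab}, this time in $\bG_m^2(K)$. The multiplicative stabilizer $H_1$ of $(a,\tau(a))$ over $D_0$ is infinite and connected; for $(g,h)$ generic in $H_1$ one has $(ga,h\tau(a))\equiv_{D_0}(a,\tau(a))$, hence $(ga,h\tau(a))$ satisfies the \emph{same} additive equation $(\blacklozenge)$. Now Corollary~\ref{C:Trig_Charlotte} legitimately applies to $S_1(ga)$ and $S_2(h\tau(a))$, and after choosing $(\tht_1,\tht_2)$ minimal and subtracting one obtains $\si_{\tht_1}(g)=\si_{\tht_2}(h)$ on $H_1$. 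This yields $\si_{\tht_2}(\tau(a))=\lambda_a\,\si_{\tht_1}(a)$ for some $\lambda_a\in D_0$; comparing with $b$, $a+b$ and $ab$ then forces $\lambda_a=1$, giving $\si_{\tht_2}\circ\tau=\si_{\tht_1}$. So the ``bookkeeping'' you anticipate is not a matter of peeling additive terms: it is the interplay of the additive and multiplicative stabilizers that does the work.

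Two minor points: the ambient group is $\bG_a^2$ (and $\bG_m^2$), since $a$ is a single element, not $\bG_a^k\times\bG_a^k$; and one should take $a,b$ generic over $D\cup\tau\inv(D)$ so that $\tau(a),\tau(b)$ are also generic over $D$.
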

\begin{proof}
Soit $\tau$ un automorphisme de la structure $K$ borné au-dessus de
l'ensemble fini $D$. Puisque la trigonalisation des opérateurs donne une 
structure bi-interprétable, avec les mêmes automorphismes 
associés, sur laquelle $\tau$ induit aussi un automorphisme borné, nous 
supposerons par la suite que les opérateurs sont triangulaires, comme dans 
la proposition \ref{P:Triang_Charlotte}.

Prenons deux éléments $a$ et $b$
génériques et indépendants sur $D \cup \tau\inv(D)$. En particulier, leurs 
images $\tau(a)$ et $\tau(b)$ sont aussi génériques sur $D$.  

Posons $D_0=\cl(D)\cap \alg{\sscl{D(a,b,\tau(a),\tau(b)}}$. Le lemme
\ref{L:independanceSurCloture} entraîne que les paires $(a,\tau(a))$,
$(b,\tau(b))$ et $(a+b,\tau(a+b))$ sont deux-à-deux indépendants sur
$D_0$. De plus, les éléments $a$ et $\tau(a)$ restent génériques sur 
$D_0$. Le lemme \ref{L:stab} à l'intérieur de $\bG_a^2(K)$ donne que 
$(a,\tau(a))$ est dans un translaté, type-définissable sur $D_0$, de son 
stabilisateur additif $H\leq \bG_a^2(K)$, qui est connexe. 

Puisque $D\subset D_0$, alors $\tau(a)\in\cl(D_0,a)$, donc 

$$a \nind_{D_0} \tau(a),$$

\noindent  par la remarque \ref{R:indi}. En particulier, la paire 
$(a,\tau(a))$ n'est pas générique dans $\bG_a^2(K)$, donc l'indice de $H$ dans 
$\bG_a^2(K)$ n'est pas borné. Par le corollaire \ref{C:sousgroupesAdditifs}, 
le sous-groupe $H$ est contenu dans un sous-groupe d'indice non borné défini 
par une combinaison linéaire sur $D_0$ 
de la forme $$\lambda_{\tht_1} \tht_1( x) +S_1(x)+ \mu_{\tht_2}
\tht_2( y) + S_2(y) = 0,$$ où $S_1$, respectivement $S_2$, est une 
combinaison linéaire de mots en $x$, respectivement en $y$, à coefficients
sur $D_0$ de degré strictement inférieur à $\tht_1$, respectivement à
$\tht_2$.

Comme $(a,\tau(a))$ est dans un translaté, il existe $\xi$ dans 
$D_0$ tel que~: 

\begin{equation}\label{E:premiere} 
 \lambda_{\tht_1} \tht_1( a) + \mu_{\tht_2}
\tht_2( \tau(a)) + S_1(a) + S_2(\tau(a)) =\xi. \tag{$\blacklozenge$}
\end{equation}

\noindent De plus $\lambda_{\tht_1} \tht_1( a) +S_1(a) \neq 0 $ et
$\mu_{\tht_2}\tht_2(\tau(a)) + S_2(\tau(a)) \neq 0$, car les 
éléments $a$ et $\tau(a)$ sont chacun génériques sur $D_0$. 

Quitte à remplacer $D_0$, prenons $(\tht_1,\tht_2)$ minimal dans  
l'ordre lexicographique tel que $(a,\tau(a))$ satisfait une telle équation 
sur des paramètres au-dessus desquels $a$ et $\tau(a)$ restent génériques.

À nouveau, le lemme \ref{L:stab} à l'intérieur du groupe multiplicatif 
$\bG_m^2(K)$ donne que l'uple $(a,\tau(a))$ a un stabilisateur $H_1$ 
connexe et type-définissable sur $D_0$. Comme  $(a,\tau(a))$ est générique 
dans un translaté de $H_1$, aussi type-définissable sur $D_0$, on conclut 
que $H_1$ est infini. En particulier, il existe des éléments génériques de 
$H_1$ sur tout ensemble de paramètres. 

 Prenons $(g,h)$ un générique de $H_1$ indépendant de
$(a,\tau(a))$ sur $D_0$. Alors, l'élément $(g\cdot a, h\cdot
\tau(a))\equiv_{D_0} (a,\tau(a))$, donc 

$$\lambda_{\tht_1} \tht_1(g\cdot a) + \mu_{\tht_2}
\tht_2(h\cdot \tau(a)) + S_1(g\cdot a) + S_2(h\cdot \tau(a)) =\xi,$$

\noindent ce qui entraîne, par le corollaire \ref{C:Trig_Charlotte}

\begin{equation}\label{E:deuxieme}
 \lambda_{\tht_1}\si_{\tht_1}(g) \tht_1( a) + \mu_{\tht_2}
\si_{\tht_2}(h)\tht_2(\tau(a))
+S_1'(a) +S_2'(\tau(a)) =\xi,
\tag{$\bigstar$}
\end{equation}

\noindent pour des combinaisons linéaires $S_1'$ et
$S_2'$ sur $\sscl{D_0(g,h)}$, de degrés inférieurs à $\tht_1$ et
$\tht_2$, respectivement. Comme $a$ et $\tau(a)$ restent chacun 
générique sur $\sscl{D_0(g,h)}$, si l'on multiplie
(\ref{E:premiere}) par $\si_{\tht_2}(h)$ et l'on soustrait
(\ref{E:deuxieme}), on obtient  des combinaisons linéaires $R_1$ et
$R_2$ sur $\sscl{D_0(g,h)}$, de degrés inférieurs à $\tht_1$ et
$\tht_2$, respectivement, telles que

\[\lambda_{\tht_1} (\si_{\tht_1}(g) - \si_{\tht_2}(h)) \tht_1( a) + R_1(a) 
+ R_2(\tau(a)) =\xi(1-\si_{\tht_2}(h)).\]
\vskip2mm
\noindent  La minimalité de 
$(\tht_1,\tht_2)$ donne que 

$$\si_{\tht_1}(g) = \si_{\tht_2}(h).$$  

\noindent
Puisque $H_1$ est connexe, on en déduit que 

$$H_1\subset
\{(g,h)\in\bG_m^2\,|\, \si_{\tht_1}(g) = \si_{\tht_2}(h) \}.$$  
 Ainsi,  il existe un 
élément $\lambda_a\neq 0$ dans $D_0$ tel que $\si_{\tht_2}(\tau(a)) = \lambda_a\cdot
\si_{\tht_1}(a)$, car $(a,\tau(a))$ est dans un 
translaté multiplicatif de $H_1$ type-définissable sur $D_0$.  

Comme  $b$ est un générique quelconque indépendant de $a$ sur  $D \cup \tau\inv(D)$ et que $(b,\tau(b))$ a même stabilisateur additif $H$ que $(a,\tau(a))$, on en déduit que pour tout générique $x$ sur  $D \cup \tau\inv(D)$,
il existe un élément $\lambda_x \neq 0$ dans $K$ tel que $\si_{\tht_2}(\tau(x)) = \lambda_x\cdot
\si_{\tht_1}(x)$ (pour les mêmes mots $\tht_1$ et $\tht_2$).

De plus, on a  l'égalité~:

$$ \lambda_{a+b}
\si_{\tht_1}(a+b)=\si_{\tht_2}(\tau(a+b))=\si_{\tht_2}(\tau(a))+\si_{\tht_2}(\tau(b))=\lambda_a
\si_{\tht_1}(a) + \lambda_b
\si_{\tht_2}(b).$$

\noindent En particulier, 

$$ (\lambda_{a+b}-\lambda_a)\si_{\tht}(a)=
(\lambda_b-\lambda_{a+b})\si_{\tht}(b),$$
\noindent qui entraîne $\lambda_a=\lambda_{a+b}=\lambda_b$, car
$a\ind_{D_0} b$. 

\noindent Alors, pour tout générique $x$ sur  $D \cup \tau\inv(D)$, on a $\lambda_x = \lambda_a$ et $\lambda_a= \lambda_b= \lambda_{a \cdot b} = \lambda_{a} \cdot \lambda_b$, donc $\lambda_a =1$. Enfin, puisque tout 
élément s'écrit comme une somme de deux
 génériques, on obtient l'égalité $\si_{\tht_2}\circ \tau = \si_{\tht_1}$.

Comme le Frobenius commute avec tout automorphisme corpique, il existe deux entiers naturels $m_1$ et $m_2$, et des automorphismes $\tau_1$ et $\tau_2$ qui sont compositions de puissances entières des automorphismes associés, tels que $\si_{\tht_i} = \Frob^{m_i} \circ \tau_i$, pour $i=1, 2$. Puisque $\tau$ est un automorphisme, on a nécessairement $m_1=m_2$ si $K$ n'est pas algébriquement clos, ce qui permet de conclure.
\end{proof}

\begin{remark}\label{R:SCF} Tout corps séparablement clos de degré d'imperfection fini, 
muni d'une famille finie commutative de dérivations de Hasse-Schmidt itératives  avec une 
$p$-base canonique nommée, vérifie les hypothèses \ref{H:types}, \ref{H:Simple} et \ref{H:Forking} \cite{fD88,mZ03}. Par la remarque \ref{R:Trig_SCF}, la preuve du théorème précédent s'applique également à ce contexte. Ainsi, aucun corps séparablement clos de degré d'imperfection fini non nul  
ne possède d'automorphismes bornés non-triviaux fixant une $p$-base (rappelons 
que toute $p$-base détermine une famille finie commutative de dérivations, qui sont définissables 
au-dessus de celle-ci). 

\noindent  Nous ignorons si les hypothèses sont vérifiées pour tout corps muni de $G$-dérivations \cite{dHpK16}.
\end{remark}

Puisque les automorphismes associés à un corps muni d'opérateurs
libres existentiellement clos ne commutent pas avec les opérateurs,
ils ne sont pas des $\LL$-automorphismes, ce qui donne le résultat
suivant.

\begin{cor}\label{C:pas_de_bornes}
Un corps muni d'au moins deux opérateurs libres existentiellement
clos n'a pas d'automorphismes bornés non-triviaux. 
\end{cor}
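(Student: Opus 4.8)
The statement to prove is Corollary~\ref{C:pas_de_bornes}: an existentially closed field equipped with at least two free operators has no nontrivial bounded automorphisms. The idea is simply to combine Theorem~\ref{T:noborne} with the structural remarks on free operators already recorded in the excerpt, so the proof will be short. First I would check that such a field satisfies all the hypotheses of Theorem~\ref{T:noborne}: Hypothesis~\ref{H:Residue} holds by Assumption~4.1 of \cite{MS14} (in the free, characteristic-zero setting the residue fields of the associated $\F$-algebra are $\F$); Hypothesis~\ref{H:Endo} holds by \cite[Lemma 4.11]{MS14}; Hypothesis~\ref{H:AlgebriquementClosPlusFrob} is satisfied after passing to a sufficiently saturated model; Hypotheses~\ref{H:types} and \ref{H:Simple} follow from \cite[Proposition 5.5, Proposition 5.6, Theorem 5.9]{MS14}; and Hypothesis~\ref{H:Forking} follows from \cite[Theorem 5.12 and Claim 6.17]{MS14}, as already noted in the text.

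Given these, Theorem~\ref{T:noborne} tells us that every bounded $\LL$-automorphism $\tau$ is a composition of the associated automorphisms $\si_0,\ldots,\si_t$, their inverses, and (in positive characteristic, here irrelevant since we are in characteristic zero) powers of Frobenius. So it remains only to argue that among such compositions, the only one that is actually an $\LL$-automorphism is the identity. The key observation --- already flagged in the paragraph preceding the corollary --- is that the associated automorphisms of a free-operator field do \emph{not} commute with the operators $F_1,\ldots,F_n$, and hence are not $\LL$-automorphisms; more precisely, no nontrivial word in the $\si_i^{\pm 1}$ commutes with all the $F_j$. Here the hypothesis of \emph{at least two} free operators is what rules out the degenerate case (with a single operator, Proposition~\ref{P:calculsPourUnOperateur} shows the structure may be bi-interpretable with a pure field or a difference field, where an associated automorphism \emph{is} an $\LL$-automorphism).

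Concretely, I would argue as follows. Suppose $\tau = \sigma_{i_1}^{\pm 1}\cdots \sigma_{i_m}^{\pm 1}$ is a nontrivial reduced word that is an $\LL$-automorphism, hence commutes with each operator $F_j$. Because the operators are free, the symbols $F_j(x)$ for a generic $x$ are algebraically independent over $\F\sscl{x}$ together with all words in them, and the associated automorphisms act on these symbols by genuinely permuting/shifting the free data (each $\si_i$ is a nontrivial $\F$-linear combination of the operators that is a field endomorphism but moves the operators nontrivially, since $D(\F)$ is not a product of copies of $\F$ when there are two or more free operators). Applying $\tau\circ F_j$ and $F_j\circ \tau$ to a generic element and comparing, using the triangular form of the operators (Proposition~\ref{P:Triang_Charlotte}, Corollary~\ref{C:Trig_Charlotte}) to read off leading words, forces a nontrivial algebraic relation among the free data of a generic, contradicting freeness --- unless the word $\tau$ is empty. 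Hence $\tau = \mathrm{Id}$, as desired.

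\emph{Main obstacle.} The delicate point is the last paragraph: making precise why a nontrivial word in the associated automorphisms cannot commute with all the $F_j$ on an existentially closed field with $\geq 2$ free operators. One must use the explicit description of the $\si_i$ as residual projections $\rho_i\circ\theta_i\circ\varphi$ (from the discussion after Hypothesis~\ref{H:Residue}) and the freeness of the operators (\cite[Remark 3.8]{MS14}) to see that the $\si_i$ induce a nontrivial action on the operator data that cannot be trivial on a sufficiently generic element; the case of two free \emph{automorphisms} $\sigma_1\ne\sigma_2$ is the simplest instance and is exactly the one highlighted in the \textbf{Remarque} after the Introduction's theorem, so the general argument is a formalization of that observation. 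Everything else is bookkeeping: verifying the hypotheses, which the excerpt has essentially already done by citing \cite{MS14}.
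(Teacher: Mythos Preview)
Your proposal is correct and follows exactly the paper's approach: the paper's entire proof is the one sentence preceding the corollary, namely that the associated automorphisms of an existentially closed free-operator field do not commute with the operators and hence are not $\LL$-automorphisms, so Theorem~\ref{T:noborne} forces any bounded $\LL$-automorphism to be trivial. You are in fact more careful than the paper in flagging that one must rule out nontrivial \emph{words} in the $\si_i^{\pm1}$, not just the $\si_i$ themselves; the paper leaves this implicit.
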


\begin{cor}\label{C:LasEGT}(\cite[Corollaire 16]{dL92} et
\cite[Example 3.13]{EGT})
 Le groupe $\Autf(M)$ est simple, pour $M$  un corps de
caractéristique $0$ saturé dénombrable algébriquement clos ou
différentiellement clos de caractéristique $0$. Si $M$ est un corps de
caractéristique positive saturé dénombrable algébriquement clos,
alors le groupe $\Autf(M)$ est simple modulo le sous-groupe
monogène engendré par le Frobenius. 
\end{cor}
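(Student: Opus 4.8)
The strategy is to reduce the simplicity statement to the characterisation of bounded automorphisms provided by Theorem \ref{T:noborne}, via the general result of \cite{dL92} (resp.\ \cite{EGT}) that $\Autf(M)$ is simple modulo the bounded strong automorphisms in any strongly minimal theory, and more generally in any structure carrying an integer-valued dimension with a compatible stationary independence relation. First I would recall that an algebraically closed field (in any characteristic) is strongly minimal, and that a differentially closed field of characteristic $0$ carries the differential transcendence degree as an integer-valued dimension with the stationary independence coming from non-forking; hence in both situations the cited general theorem applies and yields that $\Autf(M)$ is simple modulo the normal subgroup $B$ of bounded strong automorphisms. So the whole point is to identify $B$.

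Next I would invoke Theorem \ref{T:noborne}: in the cases $(K,\mathrm{Id})$ algebraically closed and $(K,\dd)$ differentially closed of characteristic $0$ (with a single derivation, hence a fortiori covered, the relevant hypotheses \ref{H:Residue}, \ref{H:Endo}, \ref{H:AlgebriquementClosPlusFrob}, \ref{H:types}, \ref{H:Simple}, \ref{H:Forking} being satisfied — for the differentially closed case the associated endomorphism is the identity and the independence is the usual algebraic/differential-algebraic disjointness), every bounded automorphism is a composition of the associated automorphisms and their inverses together with an integer power of the Frobenius when $K$ is perfect of positive characteristic. Since the only associated automorphism here is the identity, this means: in characteristic $0$ (algebraically closed or differentially closed) the only bounded automorphism is the identity, so $B=\{\mathrm{Id}\}$ and $\Autf(M)$ is simple outright; in positive characteristic (algebraically closed) the bounded automorphisms are exactly the integer powers of the Frobenius, so $B$ is the monogenic subgroup $\langle\Frob\rangle$. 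One should note here that $\Frob$ is indeed a strong automorphism, since it fixes $\alg{\F_p}$ pointwise and the strong automorphisms of an algebraically closed field are precisely those fixing the algebraic closure of the prime field — this is the observation already made in the introduction about $\Aut(\C)/\Autf(\C)$ being the absolute Galois group of $\Q$, transposed to characteristic $p$.

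The remaining point is purely bookkeeping: a saturated countable model $M$ is in particular sufficiently saturated and homogeneous for its theory, so Hypothesis \ref{H:AlgebriquementClosPlusFrob} is met, and over such a model $\Aut(M)$ is a Polish group acting so that Lascar's machinery applies. Combining, $\Autf(M)/B$ is simple with $B$ as identified, which is exactly the statement. I do not expect a genuine obstacle here — the content is entirely in Theorem \ref{T:noborne} and in the cited simplicity-modulo-bounded results; the only mild care needed is checking that the single-operator cases ($\mathrm{Id}$, one derivation) genuinely fall under the hypotheses of Theorem \ref{T:noborne} (which they do, by Proposition \ref{P:calculsPourUnOperateur} and the discussion following Hypothesis \ref{H:Endo}), and that $\Frob$ is strong, not merely an automorphism.
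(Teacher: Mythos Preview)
Your proposal is correct and follows the same route the paper takes: the corollary is stated without proof, as an immediate consequence of Theorem \ref{T:noborne} combined with the simplicity-modulo-bounded results of \cite{dL92} and \cite{EGT} recalled in the introduction. Your explicit checks (that the single-operator cases satisfy the hypotheses, and that $\Frob$ is strong since it fixes $\alg{\F_p}$) simply spell out details the paper leaves implicit.
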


\end{document}